\def\color[#1]#2{}
\gdef\th@break{\normalfont\slshape
  \def\@begintheorem##1##2{\item[%
       \rlap{\vbox{\hbox{\hskip \labelsep\theorem@headerfont ##1\ ##2}%
                   \hbox{\strut}}}]}%
\def\@opargbegintheorem##1##2##3{%
  \item[\rlap{\vbox{\hbox{\hskip \labelsep \theorem@headerfont
                     ##1\ ##2\ ##3}%
                    \hbox{\strut}}}]}}
\newtheorem{theorem}{Theorem}[section]
\newtheorem{conjecture}{Conjecture}[section]
\newtheorem{proposition}{Proposition}[section]
\newtheorem{corollary}{Corollary}[section]
\newtheorem{remark}{Remark}
\newtheorem{example}{Example}
\newtheorem{lemma}{Lemma}[section]
\newcommand{\bfR}{\mathbf{R}}
\newcommand{\bfS}{\mathbf{S}}
\newcommand{\bfT}{\mathbf{T}}
\newcommand{\Ch}[2]{\begin{bmatrix} #1 \\ #2 \end{bmatrix}}
\newcommand{\epsm}{\boldsymbol{\varepsilon}}
\newcommand{\lm}{\boldsymbol{\lambda}}
\newcommand{\wm}{\boldsymbol{\omega}}
\newcommand{\Am}{\mathsf{A}}
\newcommand{\Cm}{\mathsf{C}}
\newcommand{\Mm}{\mathsf{M}}
\newcommand{\Vm}{\mathsf{V}}
\newcommand{\calL}{\mathcal{L}}
\newcommand{\calO}{\mathcal{O}}
\newcommand{\sM}{\mathsf{M}}
\def\GL{\mathsf{GL}}
\def\Q{\mathbb{Q}}
\def\Z{\mathbb{Z}}
\def\C{\mathbb{C}}
\def\R{\mathbb{R}}
\def\H{\mathbb{H}}
\def\GF{\mathbb{F}}
\def\Sp{\textrm{Sp}}
\def\ov{\overline}
\def\Jac{\textrm{Jac}}
\def\Aut{\textrm{Aut}}
\def\Hom{\textrm{Hom}}
\def\End{\textrm{End}}
\def\Gal{\textrm{Gal}}
\def\Pic{\textrm{Pic}}
\def\im{\textrm{Im}}
\def\re{\textrm{Re}}
\title{Explicit computations of Serre's obstruction for genus $3$ curves and application to optimal curves}
\author{Christophe Ritzenthaler\footnote{partially supported by grant MTM2006-11391 from the Spanish MEC}}
\begin{document}
\maketitle

\begin{abstract}
Let $k$ be a field of characteristic different from $2$. There can be an obstruction for a principally polarized abelian threefold $(A,a)$ over $k$, which is a Jacobian over $\overline{k}$, to be a Jacobian over $k$. It can be computed  in terms of the rationality of the square root  of the value of a certain Siegel modular form.  
We show how to do this explicitly   for principally polarized abelian threefolds which are the third power of an elliptic curve with complex multiplication. We use our numeric results to prove or refute the existence of some optimal curves of genus $3$. 
\end{abstract}

\section{Introduction}
Let $p$ be a prime and $q=p^n$ for $n>0$. For $g \geq 0$, it is well known that the maximal number of points of a (smooth, absolutely irreducible, projective) curve of genus $g$ over $\GF_q$ is less than or equal to  Serre-Weil bound $q+1+g m$ where $m=\lfloor 2 \sqrt{q} \rfloor$. However, it is a long standing problem to give the precise value of this maximum, denoted $N_q(g)$. Even to know if, for given $q$ and $g$, there is a a genus $g$ curve over $\GF_q$ (called \emph{optimal}) which reaches Serre-Weil bound is a difficult problem. Closed formulas for any $q$ are only known for $g=1$ \cite{deuring}, \cite{waterhouse} and $g=2$ \cite{serreg2}. As soon as  $g \geq 3$, one only knows $N_q(g)$ for some  $q$  square or for small $q$  (see  \cite{ibukiyama}, \cite{NR} and \cite{top} for $g=3$  and \cite{vandergeer}  for $g \leq 50$).\\
Still, for $g=3$,  there is a general result due to Lauter \cite{lauterg3} who proves  that for all $q$ there  exists a genus $3$ curve $C$ over $\GF_q$ such that
\begin{equation} \label{pm3}
| \# C(\GF_q) -q-1| \geq 3m-3.
\end{equation}
Hence there is either a curve whose number of points is up to $3$ from Serre-Weil bound \emph{or} up to $3$ from the minimum number of points $q+1-3m$. This ambiguity, which prevents from getting a quasi-optimal result, is actually a consequence of a  general theorem  derived from a precise form of Torelli's theorem. We give here a version due to Serre.
\begin{theorem}[{\cite[Appendix]{lauter}}]
\label{twist}
Let $(A,a)$ be a principally polarized abelian variety  of dimension $g \geq 1$
over a field $k$. Assume that $(A,a)$ is isomorphic over $\overline{k}$ to
the Jacobian of a curve $C_{0}$ of genus $g$ defined over $\overline{k}$.
The following alternative holds :
\begin{enumerate}
\item
\label{TWI1}
If $C_{0}$ is hyperelliptic, there is a curve $C/k$ isomorphic to
$C_{0}$ over $\overline{k}$ such that $(A,a)$ is $k$-isomorphic
to $(\Jac(C),j)$ where $j$ is the canonical polarization on $\Jac(C)$.
\item
\label{TWI2}
If $C_{0}$ is not hyperelliptic, there is  a curve $C/k$ isomorphic to
$C_{0}$ over $\overline{k}$, and a quadratic character
$$
\begin{CD}
\epsilon : \Gal(k^{\textrm{sep}}/k) & @>>> &
\{\pm 1\}
\end{CD}
$$
such that the twisted abelian variety
$(A,a)_{\epsilon}$ is $k$-isomorphic to $(\Jac(C),j)$. The character
$\epsilon$ is trivial if and only if $(A, a)$ is $k$-isomorphic to a
Jacobian.
\end{enumerate}
\end{theorem}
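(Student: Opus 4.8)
The plan is to run a Galois-descent argument whose only non-formal ingredient is the strong (automorphism-theoretic) form of Torelli's theorem. Fix once and for all an isomorphism $\alpha\colon (A,a)_{\overline k}\to (\Jac(C_0),j_0)$ over $\overline k$ and write $G=\Gal(\overline k/k)$. Because $(A,a)$ is defined over $k$, one has ${}^{\sigma}(A,a)=(A,a)$ for each $\sigma\in G$, so the maps $\psi_\sigma:={}^{\sigma}\alpha\circ\alpha^{-1}$ are isomorphisms of principally polarized abelian varieties $(\Jac(C_0),j_0)\to{}^{\sigma}(\Jac(C_0),j_0)=(\Jac({}^{\sigma}C_0),j_0^{\sigma})$, and a direct computation yields the cocycle relation $\psi_{\sigma\tau}={}^{\sigma}\psi_\tau\circ\psi_\sigma$. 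This family $\{\psi_\sigma\}$ is precisely the descent datum recovering $(A,a)$ from $(\Jac(C_0),j_0)$; since $\alpha$ and $(A,a)$ are defined over a finite extension, it factors through a finite quotient of $G$.

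The heart of the matter is the refined Torelli theorem, which I would invoke in the following shape: every isomorphism of canonically polarized Jacobians $(\Jac(C_0),j_0)\to(\Jac(C_0'),j_0')$ is of the form $\pm\Jac(\phi)$ for an isomorphism of curves $\phi\colon C_0\to C_0'$; when $C_0$ is hyperelliptic the sign can be absorbed into $\phi$, because the hyperelliptic involution induces $-1$ on the Jacobian, whereas when $C_0$ is not hyperelliptic the curve isomorphism $\phi$ is \emph{unique} and the sign is an \emph{independent} invariant, i.e. $\Aut(\Jac(C_0),j_0)=\{\pm1\}\times\Aut(C_0)$. Applying this clause to each $\psi_\sigma$ is exactly what splits the proof into the two announced cases, and it is the step I expect to be the genuine obstacle: the dichotomy that $-1$ is or is not realized by a curve automorphism according as $C_0$ is or is not hyperelliptic, together with the uniqueness of $\phi$ in the non-hyperelliptic case. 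Everything else is formal descent.

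In the hyperelliptic case I would absorb every sign into the curve isomorphism, writing $\psi_\sigma=\Jac(\phi_\sigma)$ with $\phi_\sigma\colon C_0\to{}^{\sigma}C_0$. The functoriality of $\Jac$ together with the injectivity of $\phi\mapsto\Jac(\phi)$ (Torelli; the cases $g\le 2$, where every curve is hyperelliptic, and $g=1$, where $-1$ is a curve automorphism, are subsumed here) turns the cocycle relation for $\{\psi_\sigma\}$ into the descent relation $\phi_{\sigma\tau}={}^{\sigma}\phi_\tau\circ\phi_\sigma$ for $\{\phi_\sigma\}$. By effectivity of Galois descent for curves, this descent datum yields a curve $C/k$ with $C_{\overline k}\cong C_0$, and by construction the natural descent datum of $(\Jac(C),j)$ is $\{\Jac(\phi_\sigma)\}=\{\psi_\sigma\}$, whence $(\Jac(C),j)\cong_k(A,a)$. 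This is part~\ref{TWI1}.

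In the non-hyperelliptic case the sign cannot be absorbed, so I would write uniquely $\psi_\sigma=\epsilon(\sigma)\,\Jac(\phi_\sigma)$ with $\epsilon(\sigma)\in\{\pm1\}$. Substituting into $\psi_{\sigma\tau}={}^{\sigma}\psi_\tau\circ\psi_\sigma$, using that $\pm1$ is Galois-fixed and that $\Jac$ commutes with conjugation, and then separating scalars from curve isomorphisms by the uniqueness clause, shows simultaneously that $\epsilon\colon G\to\{\pm1\}$ is a homomorphism and that $\phi_{\sigma\tau}={}^{\sigma}\phi_\tau\circ\phi_\sigma$, a genuine descent datum. Descending $C_0$ along $\{\phi_\sigma\}$ produces $C/k$ whose Jacobian carries the descent datum $\{\Jac(\phi_\sigma)\}$; since $\{\psi_\sigma\}$ differs from it exactly by $\epsilon$, the variety $(A,a)$ is the quadratic twist of $(\Jac(C),j)$ by $\epsilon$, equivalently $(A,a)_\epsilon\cong_k(\Jac(C),j)$. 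Finally, replacing $\alpha$ by $\gamma\circ\alpha$ with $\gamma=\delta\cdot\Jac(\theta)\in\{\pm1\}\times\Aut(C_0)$ conjugates $\psi_\sigma$ and leaves its scalar part $\epsilon(\sigma)$ unchanged (the coboundary $\sigma\mapsto{}^{\sigma}\delta\cdot\delta^{-1}$ is trivial), so $\epsilon\in\Hom(G,\{\pm1\})$ is intrinsic to $(A,a)$. In particular $\epsilon$ is trivial exactly when $\{\psi_\sigma\}$ itself descends $C_0$, i.e. exactly when $(A,a)$ is $k$-isomorphic to a Jacobian, completing part~\ref{TWI2}.
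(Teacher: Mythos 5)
The paper itself gives no proof of this theorem: it is quoted verbatim from Serre's appendix to \cite{lauter}. Your descent argument is correct and is essentially Serre's own proof: the single non-formal ingredient is the strong form of Torelli's theorem ($\Aut(\Jac(C_0),j_0)=\Aut(C_0)$ when $C_0$ is hyperelliptic, and $\{\pm 1\}\times\Aut(C_0)$ otherwise), after which the sign component of the cocycle $\psi_\sigma$ produces the quadratic character $\epsilon$, and your check that $\epsilon$ is unchanged when $\alpha$ is replaced by $\gamma\circ\alpha$ is exactly what makes $\epsilon$ intrinsic and settles the final equivalence. Two minor points worth flagging: to get $\epsilon$ defined on $\Gal(k^{\textrm{sep}}/k)$ as stated (rather than on an automorphism group of $\overline{k}/k$, which is problematic for non-perfect $k$) one should note that $\alpha$ is automatically defined over $k^{\textrm{sep}}$, since the Isom scheme of polarized abelian varieties is finite \'etale by rigidity; and the converse direction of the last claim (a $k$-Jacobian forces $\epsilon$ trivial) deserves one explicit line, namely choosing $\alpha$ through a $k$-isomorphism $(A,a)\cong(\Jac(C'),j')$ and a curve isomorphism $C'_{\overline{k}}\cong C_0$ supplied by Torelli, then invoking the independence of $\epsilon$ from the choice of $\alpha$.
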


Let us explain how this applies when $k=\GF_q$. In order to prove the existence of a curve with a certain number of points $N$, one strategy is to construct an abelian variety over $k$ whose Frobenius endomorphism $\pi$ has trace $1+q-N$ and then to prove that this abelian variety is actually a Jacobian. When $g<4$, this is tractable because every abelian variety with an (indecomposable) polarization is isomorphic over $\overline{k}$ to a Jacobian (\cite{Hoyt}, \cite{Ueno}, see Th.\ref{hu}). Any genus $2$ curve is hyperelliptic, so there is no  obstruction and  one is able to describe exactly which isogeny classes of abelian surfaces over $k$ contain a Jacobian (see \cite{HNR}). However, as Th.\ref{twist} shows, as soon as $g=3$,  curves may be non hyperelliptic and there may be an obstruction to be a Jacobian over $k$. Moreover if it is the quadratic twist which is a Jacobian, then the corresponding trace is $-(1+q-N)$ and this explains the absolute value in \eqref{pm3}.\\

Serre did not only state this obstruction, he also suggested a strategy to compute it when $g=3$ and $k$ is a field of characteristic different from $2$ (see \cite[Letter to Top]{LR}). Roughly speaking, an absolutely  indecomposable principal polarized abelian threefold $(A,a)$ over $k$ is a Jacobian if the value of a certain Siegel modular form $\chi_{18}$ at the `moduli point' $(A,a)$, with respect to a rational basis of regular differentials, is a square in $k$. This assertion was  motivated by a formula of Klein \cite{klein} relating $\chi_{18}$  to the square of the discriminant of plane quartics, which  more or less gives the `only if' part of the claim.  In \cite{LRZ}, Serre's assertion was proved when $k$ is a number field.\\

In the present article, we go back to the initial motivation of Serre's letter, i.e. the existence of optimal curves of genus $3$. In Sec.\ref{secoptimal}, we  show that the ideas developed in \cite{LRZ} can be applied over finite fields as well and that the reduction of $\chi_{18}$ still represents the obstruction (Prop.\ref{chi18F}). For a different approach, see \cite{meagher}. However, one does not know how to compute directly this value over finite fields. As Serre suggested, we lift $(A,a)$ over a number field and there we can use the analytic  expression of $\chi_{18}$ in terms of Thetanullwerte. Doing the computation with enough precision, we can recognize this value as an algebraic number. Finally we reduce it to the finite field to obtain the obstruction (see Prop.\ref{mainresult} for a more precise formulation).

As the Jacobian of an optimal curves is isogenous to the power of an elliptic curve $E$, we make this procedure explicit in the particular case where $A=E^3$. Let $a_0$ be the product principal polarization on $E^3$ and $M=a_0^{-1} a \in M_3(\End(E))$. When $\End(E)$ is an order in an imaginary quadratic field, it is well known that $M$ is the matrix of a principal polarization on $E^3$ if and only if $M$ is a positive definite hermitian matrix with determinant $1$ (Prop.\ref{pp}). In  Sec.\ref{pola}, when $E$ is defined over a number field,  we  show how to translate the data $(E^3,a_0 M)$ into a specific period matrix of the corresponding torus in order to start the analytic computation of $\chi_{18}$. 

For certain orders, the set of matrices $M$ (up to isomorphisms) has been explicitly computed (see  \cite{schiemann}).  This enables us to give in Sec.\ref{secexplicit} tables of values of $\chi_{18}$ on $E^3$ for certain CM elliptic curves $E$ with class number $1$. As an application, we show in Cor.\ref{someoptimal} that there is an optimal genus $3$ curve over $\GF_q$ for $q=47,61,137,277, 23^3$ but not for $q=311$. Note that this result for $q=47$ and $q=61$ has already been obtained in \cite{top} using explicit models.


Besides showing some applications of Serre's strategy, the purpose of this article is to present data to support future conjectures on the primes dividing $\chi_{18}$ and their exponent. Indeed, if analytic computations may work out the value of $N_q(3)$ case by case, a general formula for  $N_q(3)$ will require an algebraic interpretation of $\chi_{18}$.\\

\noindent
{\bf Conventions and notation.}
If $A$ and $B$ are varieties over a field $k$, when we speak of a morphism from $A$ to $B$ we always mean  a morphism \emph{defined over $k$}. So, for instance $\End(A)$ is the ring of endomorphisms defined over $k$, $A \sim B$ means $A$ isogenous to $B$ over $k$, etc. If $(A,a)$ and $(B,b)$ are polarized abelian varieties, by an isomorphism between them, we always mean `as polarized abelian varieties'. \\ 

\noindent
{\bf Acknowledgements.} I would like to thank  Kristin Lauter who inspired this project during her visit in June 2008 and  Gilles Lachaud and Alexey Zykin for helpful discussions during the writing of the first version of this article. I am also grateful to Jordi Guàrdia who computed the model of the curve $X_{19,1}$ for me. Finally I would like to thank Jean-François Mestre and Jean-Pierre Serre for their stimulating comments and questions on the first version of this article.  \\

\section{Serre's problem and optimal genus $3$ curves} \label{secoptimal}
\subsection{Reduction properties}
In the sequel we will recall  the notation and some results of \cite{Ichi3} and \cite{LRZ}. Let $g \geq 2$ be an integer.
Let $\Mm_g:=\Mm_{g,1}$ (resp. $\Am_g:=\Am_{g,1}$)  be the moduli stack of smooth and proper curves of genus $g$ (resp. of principally polarized abelian schemes of relative dimension $g$). Let $\pi : \Cm_g \to \Mm_g$ be the universal curve (resp. $\pi : \Vm_g \to \Am_g$ be the universal abelian scheme) and $\lm=\bigwedge^{g} \pi_{*}\Omega^{1}_{\Cm_{g} / \Mm_{g}}$ (resp. $\wm=\bigwedge^{g} \pi_{*}\Omega^{1}_{\Vm_{g} / \Am_{g}}$) be the Hodge bundle on $\Mm_g$ (resp. $\Am_g$). For any $h \geq 0$ and any $\Z$-algebra $R$ we denote $\bfT_{g,h}(R) = \Gamma(\Mm_{g} \otimes R, \lm^{\otimes h})$ (resp. 
$\bfS_{g,h}(R) = \Gamma(\Am_{g} \otimes R,\wm^{\otimes h})$)
the $R$-module of \emph{Teichmüller modular forms} (resp. \emph{geometric Siegel modular forms}). If $C \in \Mm_g \otimes R$ (resp. $(A,a) \in \Am_g \otimes R$) and $f \in \bfT_{g,h}(R)$ (resp. $f \in \bfS_{g,h}(R)$) then for any basis of regular differentials $\omega_1,\ldots,\omega_g$ of $\Omega^1[C] \otimes R$ (resp. $\Omega^1[A] \otimes R$) we can define an element
$$f(C,\lambda)=f(C)/\lambda^{\otimes h} \in R \; (\textrm{resp.} \;   f((A,a),\omega)=f(A,a)/\omega^{\otimes h} \in R)$$
where $\lambda=\omega_1 \wedge \cdots \wedge \omega_g \in \lm \otimes R$ (resp. $\omega=\omega_1 \wedge \cdots \wedge \omega_g \in \wm \otimes R$).
\begin{lemma}[{\cite[p.138(i)]{FC}}] \label{association}
 The association
$$C \mapsto f(C,\omega) \; (\textrm{resp.} \; (A,a) \mapsto f((A,a),\omega))$$ is compatible with arbitrary pull-backs and isomorphisms.
\end{lemma}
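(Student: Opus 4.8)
The plan is to unwind the definitions and reduce the statement to the functoriality of a global section of a line bundle on a stack; the curve case and the abelian variety case are formally identical, so I would treat them in parallel and write out only the abelian one in detail. First I would recall that for a principally polarized abelian scheme $(A,a)/R$, classified by a point of $\Am_g\otimes R$, the fibre of $\wm$ there is the rank-one free $R$-module $\bigwedge^g\pi_*\Omega^1_{A/R}$, that $\omega=\omega_1\wedge\cdots\wedge\omega_g$ is a generator of it, and hence $\omega^{\otimes h}$ generates its $h$-th tensor power. Since the value $f(A,a)$ lies in the rank-one module $(\wm|_{(A,a)})^{\otimes h}$, the scalar $f((A,a),\omega)=f(A,a)/\omega^{\otimes h}\in R$ is well defined, and the whole content of the lemma is to check that this scalar is unchanged under the two operations.

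For compatibility with pull-backs I would argue that every ingredient of the construction commutes with base change. Given a morphism $\phi\colon R\to R'$, the pulled-back object $(A,a)\otimes_R R'$ is classified by the composite $\Spec R'\to\Spec R\to\Am_g$, the formation of $\pi_*\Omega^1$ and of its top exterior power commutes with the (arbitrary, by cohomology-and-base-change for an abelian scheme) base change involved, so $\omega$ pulls back to the corresponding generator, and $f$, being a section defined over $\Z$, pulls back to the value of $f$ on the new object. The desired equality $f((A,a)\otimes R',\omega\otimes R')=\phi\big(f((A,a),\omega)\big)$ is then merely the compatibility of the quotient $f(A,a)/\omega^{\otimes h}$ with the $R$-linear map $\phi$. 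The same reasoning applies verbatim to an arbitrary morphism of base schemes, which is the general form of the assertion.

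The substantive point is compatibility with isomorphisms, and here I would use precisely that $f$ is a section on the moduli \emph{stack}, not merely a function on isomorphism classes. An isomorphism $\psi\colon(A,a)\to(A',a')$ over $R$ induces an $R$-linear isomorphism $\psi^*\colon\wm|_{(A',a')}\to\wm|_{(A,a)}$ of Hodge fibres, and functoriality of a global section means exactly that $f(A,a)=\psi^*\big(f(A',a')\big)$ inside the $h$-th tensor power. Choosing the differentials so that $\omega_i=\psi^*\omega_i'$, hence $\omega=\psi^*\omega'$, and using that $\psi^*$ is $R$-linear, the scalars $f(A,a)/\omega^{\otimes h}$ and $f(A',a')/\omega'^{\otimes h}$ coincide. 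I expect the only real obstacle to be conceptual rather than computational: one must make rigorous the notion of the value of a section at an object of a stack and verify that it transforms under isomorphisms through the induced map on Hodge fibres. This is exactly the formalism for evaluating geometric modular forms via trivialisations of $\wm$ set up in \cite{FC}, from which the lemma is then immediate; in particular the construction genuinely depends on the pair $((A,a),\omega)$ and not on $(A,a)$ alone, automorphisms being the special case $A'=A$ of the isomorphism statement.
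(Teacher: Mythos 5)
The paper offers no proof of this lemma at all: it is quoted directly from Faltings--Chai \cite[p.~138(i)]{FC}, where the compatibility is built into the very formalism of geometric modular forms as sections of the Hodge bundle $\wm$ (resp.\ $\lm$) over the moduli stack. Your argument is precisely the standard unwinding of that formalism --- base-change compatibility of $\pi_{*}\Omega^{1}$ for abelian schemes and curves, plus functoriality of a global section of $\wm^{\otimes h}$ under the identification of fibres induced by an isomorphism $\psi$, with $\omega=\psi^{*}\omega'$ --- so it is correct and takes essentially the same route as the cited source.
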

We now assume that $R=k \subset \C$ is a number field. As usually, let us denote  $$\H_g=\{ \tau \in \GL_g(\C), \; \tau \, \textrm{symmetric and} \, \im(\tau)>0\}$$ and let $\widetilde{f}$ belong to the usual vector space $\mathbf{R}_{g,h}(\C)$
of \emph{analytic Siegel modular forms} of weight $h$ on $\H_{g}$, consisting of complex holomorphic functions $\phi(\tau)$ on
$\H_{g}$ satisfying
$$
\phi(M.\tau) = \det (\gamma \tau+ \delta)^{h} \phi(\tau)
$$
for any $M=\left(\begin{array}{cc} \alpha & \beta \\ \gamma & \delta \end{array}\right)$ in the symplectic group $\Sp_{2g}(\Z)$. 
Let  $(A,a)$ be a principally polarized abelian variety of dimension $g$
defined over $k$.
Let $\omega_{1}, \dots, \omega_{g}$ be a basis of $\Omega^{1}[A] \otimes k$
and $\omega = \omega_{1} \wedge \dots \wedge \omega_{g} \in \wm[A] \otimes k$. Let
$\gamma_{1}, \dots \gamma_{2g}$ be a symplectic basis for the polarization
$a$. We say that the matrix
$$
\Omega_a =
\begin{pmatrix}
\int_{\gamma_{1}}\omega_{1} & \cdots & \int_{\gamma_{2g}}\omega_{1}\\
\vdots &  & \vdots \\
\int_{\gamma_{1}}\omega_{g} & \cdots & \int_{\gamma_{2g}}\omega_{g}\\
\end{pmatrix}
$$
is \emph{a period matrix associated to $a$}.  If we write $\Omega_a=[\Omega_1,\Omega_2]$ with $\Omega_i \in M_g(\C)$, Riemann conditions (\cite[p.75]{bl}) imply that $\tau_a=\Omega_2^{-1} \Omega_1 \in \H_g$. We call $\tau_a$ a \emph{Riemann matrix associated to $a$}.
\begin{proposition}[{\cite[Prop.2.4.4]{LRZ}}]
\label{Riemann}
With the previous notation,
$$
f((A,a))=(2i \pi)^{gh}  \frac{\widetilde{f}(\tau_a)}{\det \Omega_{2}^{h}} \omega^{\otimes h} \in \bfS_{g,h}(\C).
$$
Moreover the map $\widetilde{f} \mapsto f$ from $ \mathbf{R}_{g,h}(\C)$ to $\bfS_{g,h}(\C)$ is an isomorphism. 
\end{proposition}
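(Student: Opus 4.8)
The plan is to pass through the analytic uniformization of $\Am_g$ by $\H_g$ and to track how the chosen differential $\omega$ compares with the tautological differential of the standard family. Over $\C$ we may write $A(\C)=\C^g/\Omega_a\Z^{2g}$ in coordinates $z=(z_1,\dots,z_g)$ for which $\omega_i=dz_i$; by construction the columns of $\Omega_a$ are the period vectors and $\gamma_1,\dots,\gamma_{2g}$ is symplectic for $a$. Setting $w=\Omega_2^{-1}z$ identifies $(A,a)$ with the standard principally polarized torus $A_{\tau_a}=\C^g/(\tau_a\Z^g+\Z^g)$ carrying its tautological differentials $dw_1,\dots,dw_g$; write $\phi\colon(A,a)\to A_{\tau_a}$ for this isomorphism of polarized abelian varieties. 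Since $w=\Omega_2^{-1}z$, one has $\phi^*(dw_1\wedge\cdots\wedge dw_g)=\det(\Omega_2)^{-1}\,\omega$, which is the source of the factor $\det\Omega_2^{h}$.

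Next I would build the map $\mathbf{R}_{g,h}(\C)\to\bfS_{g,h}(\C)$ directly. Over $\H_g$ the quotient of $\C^g\times\H_g$ by the family of lattices $\tau\Z^g+\Z^g$ is the universal abelian variety, and the tautological differentials $dw_1,\dots,dw_g$ trivialize the pullback of the Hodge bundle $\wm$; hence any $\widetilde f\in\mathbf{R}_{g,h}(\C)$ defines a section $\widetilde f(\tau)\,(dw_1\wedge\cdots\wedge dw_g)^{\otimes h}$ of $\wm^{\otimes h}$ on $\H_g$. The computation that under $M=\left(\begin{smallmatrix}\alpha&\beta\\\gamma&\delta\end{smallmatrix}\right)\in\Sp_{2g}(\Z)$ the induced isomorphism $A_{M\cdot\tau}\cong A_\tau$ multiplies the tautological $g$-form by $\det(\gamma\tau+\delta)^{\pm1}$ shows that this section is $\Sp_{2g}(\Z)$-invariant \emph{precisely} when $\widetilde f$ obeys $\widetilde f(M\cdot\tau)=\det(\gamma\tau+\delta)^h\widetilde f(\tau)$. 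It therefore descends to a section $f$ of $\wm^{\otimes h}$ over $\Am_g\otimes\C$, i.e. an element of $\bfS_{g,h}(\C)$, once one overall normalization constant is fixed.

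That constant is $(2i\pi)^{gh}$, and I would pin it down by comparing the analytic differential $dw_i$ with the algebraic normalization of $\wm$ (the one making $q$-expansions rational) at the boundary: on the Tate/Mumford degeneration the algebraic differential attached to the parameter $q=e^{2i\pi\tau}$ is $2i\pi\,dw_i$, so the $g$-form acquires $(2i\pi)^{g}$ and its $h$-th power $(2i\pi)^{gh}$. (For $g=1,h=12$ this is the classical identity $g_2^3-27g_3^2=(2i\pi)^{12}\,\Delta(\tau)$, a convenient sanity check.) With the normalization $f(A_\tau)=(2i\pi)^{gh}\,\widetilde f(\tau)\,(dw_1\wedge\cdots\wedge dw_g)^{\otimes h}$ in hand, Lemma~\ref{association} applied to $\phi$ gives
\[
f((A,a)) = \phi^*f(A_{\tau_a}) = (2i\pi)^{gh}\,\widetilde f(\tau_a)\,\bigl(\phi^*(dw_1\wedge\cdots\wedge dw_g)\bigr)^{\otimes h} = (2i\pi)^{gh}\,\frac{\widetilde f(\tau_a)}{\det\Omega_2^{h}}\,\omega^{\otimes h},
\]
which is the asserted formula.

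For the isomorphism statement, injectivity is immediate since $\widetilde f(\tau)$ is recovered from $f$ evaluated on the standard family $A_\tau$ with its tautological differential. For surjectivity, any $f\in\bfS_{g,h}(\C)$ pulls back along the universal family to a holomorphic function on $\H_g$ satisfying the automorphy law, and for $g\geq2$ Koecher's principle makes holomorphy at the cusps automatic, so the pullback lies in $\mathbf{R}_{g,h}(\C)$; the identification of algebraic global sections of $\wm^{\otimes h}$ over the stack $\Am_g\otimes\C$ with $\Sp_{2g}(\Z)$-invariant analytic sections over $\H_g$ is the required comparison (GAGA on a toroidal or Baily–Borel compactification, together with the Koecher extension). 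I expect the two genuinely delicate points to be exactly these last ones: fixing the power of $2i\pi$ correctly via the boundary/$q$-expansion normalization of the algebraic Hodge bundle, and justifying the analytic-to-algebraic comparison over the non-proper stack $\Am_g$. The remaining steps are linear algebra in the period matrix together with the transformation law.
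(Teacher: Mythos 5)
The paper gives no proof of this proposition: it is quoted directly from \cite[Prop.~2.4.4]{LRZ}, so there is no internal argument to compare yours against. Your reconstruction is correct in outline and is essentially the argument of the cited source: uniformize via $z\mapsto\Omega_2^{-1}z$ to reduce to the standard torus $A_{\tau_a}$, trivialize the Hodge bundle over $\H_g$ by the tautological $g$-form (giving the factor $\det\Omega_2^{-h}$), identify $\Sp_{2g}(\Z)$-invariance of the resulting section with the automorphy law so that it descends to $\Am_g\otimes\C$, and invoke the Koecher principle together with GAGA on a compactification for the analytic-to-algebraic comparison, which also yields injectivity and surjectivity as you describe. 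One correction of emphasis: of your two ``genuinely delicate points,'' only the comparison over the non-proper stack is actually needed for this statement; the constant $(2i\pi)^{gh}$ is immaterial to the proposition as stated, since over $\C$ any nonzero normalization would give a well-defined isomorphism $\bfR_{g,h}(\C)\to\bfS_{g,h}(\C)$. The Tate-curve/$q$-expansion argument you sketch is what justifies this particular choice of constant, but its real role is to make the map compatible with the integral structure $\bfS_{g,h}(\Z)$ (Ichikawa's theorem, $\chi_h\in\bfS_{g,h}(\Z)$), which is how the normalization is exploited later in the paper when reducing modulo $\mathfrak{p}$.
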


Let $S$ be the localization of the ring of integers $\calO_k$ at a prime 
$\mathfrak{p}$ over a prime $p \ne 2$ and let $F=S/\mathfrak{p}$ be the finite residue field. Let
   $(A,a)$ be a principally polarized abelian variety over $k$ and assume that it has a model over $S$ with good reduction at $\mathfrak{p}$.    We denote this model $(\tilde{A},\tilde{a})$. Let $\omega_1,\ldots,\omega_g$ be a basis of regular differentials of  $\Omega^1[\tilde{A}] \otimes S$ and let $\omega=\omega_1 \wedge \cdots \wedge \omega_g$.  Then by Lem.\ref{association}, for $f \in \bfS_{g,h}(S)$ 
$$f((\tilde{A},\tilde{a}) \otimes F,\omega \otimes F)=f((A,a),\omega \otimes k) \pmod{\mathfrak{p}}.$$
Hence, using Lem.\ref{association} and Prop.\ref{Riemann} we get the following proposition.
\begin{proposition} \label{reductionp}
$$f((\tilde{A},\tilde{a}) \otimes F,\omega \otimes F)=(2i \pi)^{gh}  \frac{\widetilde{f}(\tau_a)}{\det \Omega_{2}^{h}} \pmod{\mathfrak{p}}.$$
\end{proposition}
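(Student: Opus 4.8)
The plan is to deduce the statement formally from the integrality of $f$ together with the two results already established, so that the whole task reduces to careful bookkeeping between the three rings $S$, $k=\Frac(S)$ and $F=S/\frakp$.

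First I would pin down where the value lives. Since $f \in \bfS_{g,h}(S)$ is a geometric Siegel modular form with coefficients in $S$, and $(\tilde{A},\tilde{a})$ is an $S$-valued point of $\Am_g$ equipped with the trivialization $\omega=\omega_1 \wedge \cdots \wedge \omega_g$ of the Hodge bundle $\wm$ coming from a basis of $\Omega^1[\tilde{A}] \otimes S$, evaluating $f$ on this data produces by definition an element $f((\tilde{A},\tilde{a}),\omega) \in S$. This $\frakp$-integrality is what makes the reduction mod $\frakp$ legitimate, and it is the one substantive input.

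Next I would invoke Lemma \ref{association} for the two structure maps $\Spec(k) \to \Spec(S)$ and $\Spec(F) \to \Spec(S)$. Compatibility with the first pull-back gives $f((A,a),\omega \otimes k)=f((\tilde{A},\tilde{a}),\omega)\otimes k$, so the value computed on the generic fibre is just the image in $k$ of the integral element above; compatibility with the second gives $f((\tilde{A},\tilde{a})\otimes F,\omega \otimes F)=f((\tilde{A},\tilde{a}),\omega)\otimes F$, which is exactly its reduction mod $\frakp$. Reading these two identities together yields
$$f((\tilde{A},\tilde{a})\otimes F,\omega \otimes F)=f((A,a),\omega \otimes k) \pmod{\frakp},$$
the relation recorded just before the statement.

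Finally I would substitute the analytic formula of Proposition \ref{Riemann}, namely $f((A,a),\omega)=(2i\pi)^{gh}\,\widetilde{f}(\tau_a)/\det\Omega_2^{h}$, on the right-hand side to reach the claimed congruence. The only delicate point is that Proposition \ref{Riemann} is an identity in $\C$ while the congruence lives in $F$: the two are reconciled precisely by the first step, which forces the complex number $(2i\pi)^{gh}\widetilde{f}(\tau_a)/\det\Omega_2^{h}$ to coincide with the $\frakp$-integral element $f((\tilde{A},\tilde{a}),\omega)\in S$ and hence to admit a reduction. I expect this integrality and functoriality bookkeeping to be the only real obstacle; granting it, the proposition drops out by chaining Lemma \ref{association} and Proposition \ref{Riemann}.
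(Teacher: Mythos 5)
Your proposal is correct and follows essentially the same route as the paper: the paper likewise obtains the congruence $f((\tilde{A},\tilde{a}) \otimes F,\omega \otimes F)=f((A,a),\omega \otimes k) \pmod{\mathfrak{p}}$ from Lemma \ref{association} applied to the two pull-backs of the $S$-model, and then substitutes the analytic expression from Proposition \ref{Riemann}. Your added emphasis on the $\frakp$-integrality of $f((\tilde{A},\tilde{a}),\omega)\in S$ (which reconciles the complex-analytic identity with the reduction) is exactly the point the paper leaves implicit, so this is just a more explicit write-up of the same argument.
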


\subsection{The modular form $\chi_h$}
Following \cite{bl}, we recall the definition of theta functions with (entire)
characteristics
$$
[\epsm] = \Ch{\varepsilon_{1}}{\varepsilon_{2}} \in \Z^g \oplus \Z^g.
$$
 The \emph{(classical) theta function} is given, for
$\tau \in \H_{g}$ and $z \in \C^g$, by
$$
\theta  \Ch{\varepsilon_{1}}{\varepsilon_{2}}(z, \tau) =
\sum_{n \in \Z^g}
\exp(i \pi (n + \varepsilon_{1}/2)  \tau {^t (n + \varepsilon_{1}/2)} + 2 i \pi
(n + \varepsilon_{1}/2){^t (z+\varepsilon_{2}/2)}).
$$
The \emph{Thetanullwerte} are the values at $z = 0$ of these functions.
Recall that a characteristic is \emph{even} if $\varepsilon_1 \cdot {^t \varepsilon_2}
\equiv 0 \pmod{2}$ and \emph{odd} otherwise. 
For $g \geq 2$, we put $h = 2^{g - 2}(2^g + 1)$ and
$$\widetilde{\chi}_h(\tau)=(-1)^{gh/2} \cdot \frac{(2i\pi)^{gh}}{2^{2^{g-1}(2^g-1)}} \cdot  \prod_{\epsm } \theta \Ch{\varepsilon_1}{\varepsilon_2}(0,\tau),$$
$\epsm$ running over the even characteristics with coefficients in $\{0,1\}$.\\

In  \cite{igusa2}, Igusa proves that  if $g \geq 3$, then $\widetilde{\chi}_h(\tau) \in \bfR_{g,h}(\C)$.
Starting from the analytic Siegel modular form $\widetilde{\chi}_{h}$, by Prop.\ref{Riemann},  we can define a geometric Siegel modular form  
$$\chi_{h}((A,a))=(-1)^{gh/2} \cdot \frac{(2i\pi)^{gh}}{2^{2^{g-1}(2^g-1)}} \cdot  \frac{\widetilde{\chi}_{h}(\tau_a)}{\det(\Omega_2)^{h}} (\omega_1 \wedge \cdots \wedge \omega_g)^{\otimes h} \in \bfS_{g,h}(\C).$$ Ichikawa proved in \cite[Prop.3.4]{Ichi3} that, since $\chi_{h}$ has rational integral Fourier coefficients, $\chi_h$ is actually defined over $\Z$, i.e. $\chi_{h} \in \bfS_{g,h}(\Z)$. He also showed that $\chi_h$ is \emph{primitive}, i.e. its reduction modulo any prime is non zero.

\subsection{The case  $g=3$}
We specialize to the case $g=3$, i.e.
\begin{equation} \label{chi18}
\chi_{18}((A,a))=\frac{(2\pi)^{54}}{2^{28}} \cdot  \frac{\widetilde{\chi}_{18}(\tau_a)}{\det(\Omega_2)^{18}} (\omega_1 \wedge \omega_2 \wedge \omega_3)^{\otimes 18} \in \bfS_{3,18}(\Z).
\end{equation}
In \cite{Ichi4}, Ichikawa proves that there exists a Teichmüller modular form $\mu_{3,9}\in \bfT_{3, 9}(\Z)$ of weight $9$ such that if $t : \Mm_3 \to \Am_3$ is the Torelli map then
\begin{equation}
\label{chimu}
t^{*}(\chi_{18}) =   (\mu_{3,9})^2.
\end{equation}

Using this equality we can adapt the proof of \cite[Th.4.2.1]{LRZ} to the case of finite fields.
\begin{proposition} \label{chi18F}
Let $(A,a)$ be a principally polarized abelian threefold defined over a finite field $F$ of characteristic different from $2$. Let $\omega_{1}, \omega_{2}, \omega_{3}$ be a basis of
$\Omega^{1}[A] \otimes F$ and $\omega=\omega_1 \wedge \omega_2 \wedge \omega_3$. Then $(A,a)$ is isomorphic to the Jacobian of a non hyperelliptic genus $3$ curve if and only if $\chi_{18}((A,a),\omega)$
is a non-zero square in $F$. Moreover if $\chi_{18}((A,a),\omega)$ is a non-square in $F$ then $(A,a)$ is not a Jacobian.
\end{proposition}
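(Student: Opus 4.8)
The plan is to adapt the number-field argument of \cite[Th.4.2.1]{LRZ} to the finite-field setting, using the key factorization \eqref{chimu} together with the Torelli-type dichotomy of Th.\ref{twist} and the primitivity of $\chi_{18}$. The underlying strategy rests on two facts already assembled in the excerpt: first, by Th.\ref{hu} every principally polarized abelian threefold over $\overline{F}$ is the Jacobian of a (possibly hyperelliptic) genus $3$ curve $C_0$, so the only question is whether a Jacobian structure descends to $F$ and whether the resulting curve is hyperelliptic or not; second, the relation $t^*(\chi_{18})=(\mu_{3,9})^2$ expresses the pullback of $\chi_{18}$ along the Torelli map as a perfect square of a Teichm\"uller modular form defined over $\Z$, hence over $F$ after reduction.

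First I would dispose of the decomposable case and the value-zero case. Since $\chi_{18}$ is primitive (Ichikawa), its reduction mod $\mathfrak{p}$ is a nonzero modular form, so $\chi_{18}((A,a),\omega)=0$ detects exactly the locus where $(A,a)$ is \emph{not} absolutely indecomposable, i.e. where $(A,a)$ is a product of lower-dimensional polarized abelian varieties (equivalently, $C_0$ is singular/reducible rather than a smooth non-hyperelliptic or hyperelliptic curve). More precisely, one knows that $\chi_{18}$ vanishes on the hyperelliptic locus and on the decomposable locus, and is nonzero precisely on the non-hyperelliptic Jacobian locus; I would invoke the geometric description of the zero divisor of $\chi_{18}$ on $\Am_3$ to argue that $\chi_{18}((A,a),\omega)\neq 0$ holds if and only if $(A,a)\otimes\overline{F}$ is the Jacobian of a \emph{non-hyperelliptic} smooth genus $3$ curve.

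Next, assuming $(A,a)\otimes\overline{F}\cong(\Jac(C_0),j)$ with $C_0$ non-hyperelliptic, I would apply case \eqref{TWI2} of Th.\ref{twist}: there is a curve $C/F$ and a quadratic character $\epsilon:\Gal(F^{\mathrm{sep}}/F)\to\{\pm1\}$ with $(A,a)_\epsilon\cong(\Jac(C),j)$ over $F$, and $(A,a)$ itself is a Jacobian over $F$ if and only if $\epsilon$ is trivial. The heart of the argument is to express the square class of $\chi_{18}((A,a),\omega)$ in $F^\times/(F^\times)^2$ in terms of this character. Using the factorization \eqref{chimu}, for the Jacobian $(\Jac(C),j)$ one has $\chi_{18}((\Jac(C),j),\omega)=\mu_{3,9}(C,\lambda)^2$, which is a square in $F$; the quadratic twist by $\epsilon$ multiplies the value of the weight-$18$ form $\chi_{18}$ by $\epsilon$-conjugation data that track the square class, so that $\chi_{18}((A,a),\omega)$ is a square in $F$ exactly when $\epsilon$ is trivial. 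Over a number field this is precisely the content of \cite[Th.4.2.1]{LRZ}, and since all the objects ($\chi_{18}$, $\mu_{3,9}$, the Torelli relation) are defined over $\Z$ and reduce well mod $\mathfrak{p}$, the same computation of square classes survives reduction; this is where Prop.\ref{reductionp} and Lem.\ref{association} do the bookkeeping.

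The main obstacle I anticipate is making the twisting computation genuinely algebraic and characteristic-free, rather than analytic. In \cite{LRZ} the relation between the square class of $\chi_{18}$ and the character $\epsilon$ is read off over $\C$ via period matrices and the analytic expression \eqref{chi18}; over a finite field those analytic tools are unavailable, so I must instead argue purely in terms of the behavior of a weight-$18$ geometric modular form under a quadratic twist of the polarized abelian variety, together with the descent statement $t^*\chi_{18}=\mu_{3,9}^2$. The delicate point is that a quadratic twist changes the field of definition of the differentials $\omega$, hence scales $\chi_{18}((A,a),\omega)$ by the $18$th power of a scalar whose square class I must control; the fact that $18$ is even means the naive scaling is always a square, so the genuine dependence on $\epsilon$ must come through the \emph{non-triviality} of the descent datum obstructing $\mu_{3,9}$ from being defined over $F$ (a Teichm\"uller form of odd weight $9$). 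Concretely, the obstruction is that $\mu_{3,9}$ transforms with an odd weight and therefore its value lives in $F$ only up to a sign governed by $\epsilon$, so $\mu_{3,9}^2$—which equals $\chi_{18}$ after pullback—is a square in $F$ precisely when that sign is trivial, i.e. when $\epsilon$ is trivial and $(A,a)$ is a Jacobian over $F$. Verifying this sign-versus-square-class correspondence carefully, and checking that the good-reduction hypothesis lets one lift $(A,a)$, $C$, and $\epsilon$ compatibly to characteristic $0$ where \cite[Th.4.2.1]{LRZ} applies directly, is the crux of the proof.
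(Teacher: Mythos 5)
Your skeleton coincides with the paper's: the forward implication via \eqref{chimu} (so that $\chi_{18}((A,a),\omega)=\mu_{3,9}(C,\lambda)^2$ is a square, nonzero because $\mu_{3,9}$ vanishes exactly on the hyperelliptic locus), and, for the converse, Igusa's vanishing of $\chi_{18}$ on $(\Am_3\setminus t(\Mm_3))\otimes\C$ propagated to finite fields through the schematic closure, followed by the dichotomy of Th.\ref{twist}. The genuine gap sits exactly at the point you yourself call the crux: the behaviour of the square class under the quadratic twist. Your claim that ``the fact that $18$ is even means the naive scaling is always a square'' is false. If $(A',a')$ is the quadratic twist of $(A,a)$ by the non-square $c\in F^{\times}$, the isomorphism $\phi\colon A'\to A$ exists only over $F(\sqrt{c})$, and it pulls an $F$-rational basis of differentials back to $\sqrt{c}$ times an $F$-rational basis; the scalar $\lambda$ relating the two top wedges therefore satisfies $\lambda^{2}\in c\cdot(F^{\times})^{2}$, so the two values of the weight-$18$ form differ by $\lambda^{18}=(\lambda^{2})^{9}\equiv c^{9}\equiv c \pmod{(F^{\times})^{2}}$ --- a \emph{non}-square, precisely because $18/2=9$ is odd. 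This computation is the content of \cite[Cor.2.4.3]{LRZ}, which the paper invokes: $\chi_{18}((A,a),\omega)=c^{9}\cdot\chi_{18}((A',a'),\omega')$ with $c$ a non-square, valid over any field of characteristic $\neq 2$. With it the converse closes in two lines: if $(A,a)$ were not a Jacobian, Th.\ref{twist} would make $(A',a')$ the Jacobian of a non-hyperelliptic curve, hence $\chi_{18}((A',a'),\omega')$ a nonzero square, hence $\chi$ a non-square, contradicting the hypothesis. Your substitute mechanism --- an ``obstruction to $\mu_{3,9}$ being defined over $F$'' --- does not exist: $\mu_{3,9}$ is a Teichm\"uller form over $\Z$, hence perfectly well defined over $F$. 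What is true, and is the rigorous version of your sign heuristic, is that the cocycle $\epsilon(\sigma)\cdot\mathrm{Id}$ defining the twist acts on the $\overline{F}$-value $\mu_{3,9}(C_0,\lambda_0)$ through $\epsilon(\sigma)^{3\cdot 9}=\epsilon(\sigma)$, the total exponent $27$ being odd; but this is exactly the computation you left unverified, and without it (or the citation above) the proof never closes.

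Two further cautions. Your fallback --- lifting $(A,a)$, $C$ and $\epsilon$ to characteristic $0$ and applying \cite[Th.4.2.1]{LRZ} there --- cannot replace the argument over $F$, because square classes do not transfer along reduction: a non-square in the number field may reduce to a square in $F$. The paper's logic runs the other way around: the present proposition over $F$, combined with Prop.\ref{reductionp}, is what makes the lifted computation of Prop.\ref{mainresult} meaningful. Also, Th.\ref{hu} requires the polarization to be indecomposable, so your opening assertion that \emph{every} principally polarized abelian threefold over $\overline{F}$ is a Jacobian is wrong as stated (harmless here, since $\chi_{18}$ vanishes on decomposable varieties); and the non-vanishing half of your ``zero divisor of $\chi_{18}$'' step does not follow from the characteristic-zero divisor description by schematic closure, which only propagates \emph{vanishing} --- the paper obtains non-vanishing from the fact that $\mu_{3,9}$ cuts out exactly the hyperelliptic locus in every characteristic $\neq 2$, and so should you.
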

\begin{proof}
 If $(A,a)$ is isomorphic over $F$ to the Jacobian of a non
hyperelliptic genus $3$ curve $C/F$ then  
 using \eqref{chimu}, we get
$$\chi_{18}((A,a),\omega)= t^*(\chi_{18})(C,\lambda)= \mu_{3,9}^2(C,\lambda)$$
with $\lambda=\theta^* \omega$.  So it is a square. Moreover it is non zero. Indeed by definition of $\mu_{3,9}$ (see \cite[p.1059]{Ichi2}), this form is zero precisely on the hyperelliptic locus.
Now,  assume that  $\chi:=\chi_{18}((A,a),\omega)$ is a non-zero square. By \cite[Lem. 10 and 11]{igusa2}, we know that the modular form $\chi_{18}$ is zero on $(\Am_3 \setminus t(\Mm_3)) \otimes \C$ hence this is true also on the schematic closure and therefore over finite fields. From this, we deduce that $(A,a) \in t(\Mm_3 \otimes \overline{F})$ and since $\chi \ne 0$, we even know that $(A,a)$ is geometrically the Jacobian of a non hyperelliptic genus $3$ curve. Now by Th.\ref{twist}, we know that either  $(A,a)$ is  a Jacobian or  its quadratic twist $(A',a')$ is (and exactly one option holds). Suppose that $(A,a)$ is not a Jacobian. Let  $\omega'_1,\omega'_2,\omega'_3$ be a basis of $\Omega^1[A'] \otimes F$ and $\omega'=\omega_1' \wedge \omega_2' \wedge \omega_3'$.  From what we have just proven $\chi_{18}((A',a'),\omega')$ is a non-zero square in $F$. From \cite[Cor.2.4.3]{LRZ}, we know that
there exists a non square $c \in F$
such that
$${\chi}_{18}((A,a),\omega)=
c^9 \cdot {\chi}_{18}((A',a'),\omega').$$
Hence $\chi$ is not a square and we get a contradiction.
\end{proof}

 This proposition alone is however useless for applications as, so far,  we do not know how to compute directly
$\chi_{18}((A,a),\omega)$. This is why we use Serre's initial idea of lifting a principally polarized abelian threefold  over a local field of characteristic $0$, using the analytic expression
\eqref{chi18} and then reducing the result to get the obstruction by Prop.\ref{reductionp} and \ref{chi18F}. Note that we can always lift a principally polarized abelian threefold. Indeed, by Th.\ref{hu}, we see that, up to a possible quadratic twist, any principally polarized abelian threefold $(A,a)$ over a finite field $F$ is a product of Jacobians $(\Jac(C_i),j_i)$. It is then enough to lift each of the curves $C_i$ to the curve $\tilde{C}_i$ and to consider the principally polarized abelian threefold $\prod (\Jac(\tilde{C}_i),\tilde{j}_i)$.  If necessary we apply a quadratic twist to get a lift of $(A,a)$.
Let us summarize the procedure.

\begin{proposition} \label{mainresult}
Let $(A,a)$ be a principally polarized abelian threefold over a finite field $F$ of characteristic different from $2$ and let $(\tilde{A},\tilde{a})$ be a lift of $(A,a)$ defined over a local ring $S$ of a number field $k$ with residue field $F$. Then for any choice of a basis of regular differentials $\omega_{1}, \omega_2, \omega_{3} \in \Omega^{1}[\tilde{A}] \otimes S$ 
$$\chi:=\chi_{18}((\tilde{A},\tilde{a}),\omega_1 \wedge \omega_2 \wedge \omega_3)=\frac{(2\pi)^{54}}{2^{28}} \cdot  \frac{\widetilde{\chi}_{18}(\tau_{\tilde{a}})}{\det(\Omega_2)^{18}} $$
belongs to $S$. Let $\mathfrak{p}$ be such that $F=S/\mathfrak{p}$. Then
$(A,a)$ is the Jacobian of a non hyperelliptic genus $3$ curve if and only if $\chi \pmod{\mathfrak{p}}$ is a non-zero square in $F$.  Moreover if $\chi \pmod{\mathfrak{p}}$ is a  non-square in $F$ then $(A,a)$ is not a Jacobian. 
\end{proposition}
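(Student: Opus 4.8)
The plan is to assemble the statement directly from the two facts already in hand: the explicit analytic expression \eqref{chi18} for $\chi_{18}$ combined with the reduction compatibility of Prop.\ref{reductionp}, and the square criterion over finite fields of Prop.\ref{chi18F}. Apart from the integrality assertion $\chi\in S$, the argument is purely a matter of chaining the relevant identities in the right order and reducing modulo $\mathfrak{p}$, so the content is really a synthesis of the preceding propositions.

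First I would fix an embedding of the fraction field $k\subset\C$ and read off from \eqref{chi18} that
$$\chi=\chi_{18}\big((\tilde{A},\tilde{a}),\omega_1\wedge\omega_2\wedge\omega_3\big)=\frac{(2\pi)^{54}}{2^{28}}\cdot\frac{\widetilde{\chi}_{18}(\tau_{\tilde{a}})}{\det(\Omega_2)^{18}},$$
which is precisely the scalar displayed in the statement. To see that this scalar lies in $S$ rather than merely in $k$, I would not try to recognise the transcendental-looking right-hand side as an algebraic number; instead I would argue abstractly, invoking Ichikawa's result that $\chi_{18}\in\bfS_{3,18}(\Z)\subset\bfS_{3,18}(S)$. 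Since $(\tilde{A},\tilde{a})$ has good reduction it defines an $S$-point of $\Am_3\otimes S$, and $\omega^{\otimes 18}$ generates the rank-one free $S$-module $\wm[\tilde{A}]^{\otimes 18}\otimes S$; hence the quotient $\chi_{18}(\tilde{A},\tilde{a})/\omega^{\otimes 18}$ lies in $S$ by Lem.\ref{association}.

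Next I would pass to the special fibre. By definition of a lift, $(\tilde{A},\tilde{a})\otimes F=(A,a)$ and $\omega\otimes F$ is a basis of $\Omega^1[A]\otimes F$, so the reduction identity stated just before Prop.\ref{reductionp} (equivalently, Prop.\ref{reductionp} applied to $f=\chi_{18}$) gives
$$\chi_{18}\big((A,a),\omega\otimes F\big)=\chi_{18}\big((\tilde{A},\tilde{a}),\omega\big)\pmod{\mathfrak{p}}=\chi\pmod{\mathfrak{p}}.$$
Finally I would apply Prop.\ref{chi18F} to the threefold $(A,a)$ over $F$: it is the Jacobian of a non-hyperelliptic genus $3$ curve if and only if $\chi_{18}((A,a),\omega\otimes F)$ is a non-zero square in $F$, and it fails to be a Jacobian whenever this value is a non-square. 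Substituting the congruence above yields both assertions of the proposition.

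The step I expect to demand the most care is the integrality $\chi\in S$, since it is not formal from the analytic formula alone: that formula only places $\chi$ in $\C$, and the passage to $S$ rests essentially on Ichikawa's integrality (and primitivity) of $\chi_{18}$ together with the choice of an $S$-integral good model equipped with an $S$-basis of regular differentials. Once this is secured, the remainder is a faithful transcription of Prop.\ref{reductionp} and Prop.\ref{chi18F}.
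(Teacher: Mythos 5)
Your proposal is correct and follows essentially the same route as the paper: the paper presents Prop.\ref{mainresult} as a summary of the procedure described just before it, namely expressing $\chi$ via the analytic formula \eqref{chi18}, obtaining integrality from Ichikawa's result that $\chi_{18}\in\bfS_{3,18}(\Z)$ together with the $S$-model and Lem.\ref{association}, reducing modulo $\mathfrak{p}$ by Prop.\ref{reductionp}, and concluding with the square criterion of Prop.\ref{chi18F}. Your explicit treatment of the integrality step is a faithful (and slightly more detailed) rendering of what the paper leaves implicit.
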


\section{Polarizations and endomorphisms of abelian varieties } \label{pola}
\subsection{Polarization and symmetric endomorphisms} \label{poen}
Let $(A,a_0)$ be a principally polarized abelian variety of dimension $g>0$ over  a field $k$. There exists an ample line bundle $\calL_0 \in \Pic_{\ov{k}}(A)$ such that $a_0=\phi_{\calL_0} : A \to \hat{A}$. Let $NS(A)$ be the Néron-Severi group of $A$. The map $\calL \mapsto \phi_{\calL}$ from $NS_{\ov{k}}(A)$ to $\Hom_{\ov{k}}(A,\hat{A})$ is injective. If we compose this map with $\phi_{\calL_0}^{-1}$, we then obtain an injection  $NS_{\ov{k}}(A) \hookrightarrow \End_{\ov{k}}(A)$.
\begin{proposition}[{\cite[p.137]{milne}}]
Let $(A,a_0)$ be a principally polarized abelian variety over a field $k$ with all endomorphisms defined over $k$. Let $\dag$ be the Rosati involution induced by $a_0$ (i.e. $b \in \End(A) \mapsto a_0^{-1} \circ \hat{b} \circ a_0 \in \End(A)$). Let $End(A)^s$ be the sub-ring of the endomorphisms which are stable by the Rosati involution $\dag$. Then the map
$$\calL \mapsto a_0^{-1} \circ \phi_{\calL}$$
is a group isomorphism from $NS_{\ov{k}}(A)$ to $\End_{\ov{k}}(A)^s$.
\end{proposition}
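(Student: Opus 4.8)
The map under consideration factors as a composite of two maps already available. The association $\calL\mapsto\phi_{\calL}$ is, as the discussion preceding the statement records, an \emph{injective} homomorphism of abelian groups $NS_{\ov{k}}(A)\hookrightarrow\Hom_{\ov{k}}(A,\hat{A})$; I then post-compose with $g\mapsto a_0^{-1}\circ g$. Since $a_0=\phi_{\calL_0}$ is a principal polarization, $a_0^{-1}\colon\hat{A}\to A$ is an isomorphism, so $g\mapsto a_0^{-1}\circ g$ is an additive bijection $\Hom_{\ov{k}}(A,\hat{A})\xrightarrow{\sim}\End_{\ov{k}}(A)$. Hence $\calL\mapsto a_0^{-1}\circ\phi_{\calL}$ is automatically an injective homomorphism of abelian groups into $\End_{\ov{k}}(A)$, and the entire content of the statement reduces to identifying its image with the subgroup $\End_{\ov{k}}(A)^s$ of Rosati-fixed elements.

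The one structural input I would invoke is the classical description of the N\'eron--Severi group: under the canonical identification $\kappa\colon A\xrightarrow{\sim}\hat{\hat{A}}$, a homomorphism $f\colon A\to\hat{A}$ equals $\phi_{\calL}$ for some $\calL\in NS_{\ov{k}}(A)$ if and only if $f$ is \emph{symmetric}, i.e. $\hat{f}\circ\kappa=f$. In particular each $\phi_{\calL}$, and the polarization $a_0$ itself, is symmetric. Throughout I suppress $\kappa$ and abbreviate symmetry as $\hat{f}=f$, so that $\hat{a_0}=a_0$, $\hat{\phi_{\calL}}=\phi_{\calL}$, and $\widehat{a_0^{-1}}=a_0^{-1}$.

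For the inclusion of the image in $\End_{\ov{k}}(A)^s$, I set $b=a_0^{-1}\circ\phi_{\calL}$ and compute its Rosati transform $b^{\dagger}=a_0^{-1}\circ\hat{b}\circ a_0$. Dualizing the composite and using the two symmetries gives $\hat{b}=\hat{\phi_{\calL}}\circ\widehat{a_0^{-1}}=\phi_{\calL}\circ a_0^{-1}$, whence $b^{\dagger}=a_0^{-1}\circ\phi_{\calL}\circ a_0^{-1}\circ a_0=a_0^{-1}\circ\phi_{\calL}=b$, so $b$ is Rosati-fixed. For surjectivity I would start from an arbitrary $b\in\End_{\ov{k}}(A)^s$ and put $f=a_0\circ b$. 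The relation $b^{\dagger}=b$ rearranges to $\hat{b}\circ a_0=a_0\circ b$, and together with $\hat{a_0}=a_0$ this yields $\hat{f}=\hat{b}\circ a_0=a_0\circ b=f$; thus $f$ is symmetric. By the N\'eron--Severi description, $f=\phi_{\calL}$ for some $\calL\in NS_{\ov{k}}(A)$, and then $a_0^{-1}\circ\phi_{\calL}=b$ exhibits $b$ in the image. This completes the identification and hence the group isomorphism.

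The additivity and injectivity of $\calL\mapsto\phi_{\calL}$ are granted in the text and the symmetry bookkeeping above is short, so the one genuinely nontrivial ingredient — the step I would treat as the crux — is the characterization of line-bundle-induced homomorphisms as precisely the symmetric ones, together with the careful handling of the double-dual identification $\kappa$, where a misplaced $\kappa$ would silently break both the inclusion and the surjectivity. The hypothesis that all endomorphisms are defined over $k$ enters only to ensure that the Rosati involution is defined on the whole endomorphism ring and that $NS_{\ov{k}}(A)$ and $\End_{\ov{k}}(A)$ are the correct objects to compare.
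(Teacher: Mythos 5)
Your proof is correct, and it is essentially the argument behind the cited result: the paper itself gives no proof of this proposition (it is quoted directly from Milne, the reference given in its header), and the standard proof there proceeds exactly as you do — reduce everything, via the bijection $g \mapsto a_0^{-1}\circ g$, to the classical characterization of homomorphisms of the form $\phi_{\calL}$ as the symmetric maps $A \to \hat{A}$, and then check that Rosati-invariance of $b = a_0^{-1}\circ\phi_{\calL}$ is equivalent to symmetry of $a_0\circ b$. Your identification of the symmetry criterion (with the double-dual identification handled correctly) as the one nontrivial input, with the rest being bookkeeping, matches the structure of the proof in the cited source.
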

In particular, we see that if $\End(A)=\End_{\ov{k}}(A)$ then $NS_{\ov{k}}(A)=NS(A)$.
In what follows we  assume that all endomorphisms of $A$ are defined over $k$.\\

To describe the polarizations on $A$, it is then enough to understand the image of the ample line bundles by the previous map. According to
 \cite[p.209]{mumford}, this image  corresponds to  totally positive elements of the formally real Jordan algebra $NS(A) \otimes \R \simeq \End(A)^s \otimes \R$. With our applications in mind we will make the characterization more explicit for a particular case  in the next section.

\subsection{The case where $A$ is a power of an elliptic curve} \label{Apower}
We restrict ourselves to the case $A = E^g$ where $E : y^2+a_1xy+a_3 y=f(x)$, $a_1,a_3 \in k, f \in k[x]$  is an elliptic curve with all endomorphisms defined over $k$. We assume that $\End(E)$ is an order in an imaginary quadratic field $K$ and we identify it with such an $\calO_K \subset \C$ using its action on the regular differential $\omega_E=dx/(2y+a_1x+a_3)$. Let $a_{E}$ be the principal polarization on $E$ and let $a_0= (a_{E} \times \cdots \times a_{E}) : A \to \hat{A}$ be the product principal polarization on $A$. We denote by $\ov{\phantom{1}}$ the Rosati involution on $\End(E)$ associated to $a_E$. It is equal to the complex conjugation on $\calO_K$ through the identification of $\End(E)$ with $\calO_K$. In the same way, identifying $\End(A)$ with $\sM_g(\calO_K)$, we see that the Rosati involution $\dag$ associated to $a_0$ is $M \mapsto {^t \ov{M}}$. We say that $M \in \sM_g(\calO_K)$ is \emph{hermitian} if  $M={^t \ov{M}}$, i.e. $M \in \End(A)^s$.
\begin{proposition}[{\cite[p.209]{mumford}}] \label{pp}
The morphism $M \mapsto a_0 M : \sM_g(\calO_K) \to NS(A)$ restricts to a bijection between positive definite hermitian matrices of determinant $1$   and principal polarizations on $A$.
\end{proposition}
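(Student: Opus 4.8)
The plan is to combine the preceding group isomorphism with the positivity criterion and a degree computation, handling separately the two conditions ``is a polarization'' and ``is principal''.

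First I would record what is already available. Applying Milne's proposition \cite[p.137]{milne} with $\End(A)=\End_{\ov k}(A)=\sM_g(\calO_K)$ and Rosati involution $\dag : M \mapsto {}^t\ov M$, the map $\calL \mapsto a_0^{-1}\phi_{\calL}$ is a group isomorphism from $NS(A)$ onto $\End(A)^s$, the hermitian matrices; its inverse is exactly $M \mapsto a_0 M$. In particular, for a general $M$ the homomorphism $a_0 M$ need not be symmetric, but for hermitian $M$ it lies in $NS(A)$, and $M \mapsto a_0 M$ is already a bijection from all hermitian matrices onto all of $NS(A)$. The proposition therefore reduces to two claims: (i) $a_0 M$ is a polarization (i.e.\ comes from an ample bundle) iff $M$ is positive definite, and (ii) such a polarization is principal iff $\det M = 1$.

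For (i), by \cite[p.209]{mumford} the polarizations correspond to the totally positive elements of the formally real Jordan algebra $NS(A)\otimes\R \simeq \End(A)^s\otimes\R$. Since $K\otimes_{\Q}\R\simeq\C$, this algebra is the space of hermitian $g\times g$ matrices over $\C$, whose cone of totally positive elements is precisely the cone of positive definite matrices; intersecting with $\sM_g(\calO_K)$ gives the claim. More transparently, I would argue analytically: fixing $\ov k \hookrightarrow \C$ and writing $E=\C/\Lambda$ with $\calO_K$ acting by multiplication, one has $A=\C^g/\Lambda^g$, the product polarization $a_0$ is attached to the standard hermitian form $H_0(z,w)=c\cdot{}^t\ov w\, z$, and the bundle corresponding to $M$ is attached to $H_M(z,w)=c\cdot{}^t\ov w\, M z$. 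A direct check shows $H_M$ is hermitian exactly because $M$ is hermitian, and $\calL$ is ample iff $H_M$ is positive definite iff $M$ is.

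For (ii), I would use that degrees are multiplicative along isogenies: since $\phi_{\calL}=a_0\circ M$ and $a_0$ is principal, $\deg\phi_{\calL}=\deg(M)$. Viewing $M$ as an $\calO_K$-linear endomorphism of the projective $\calO_K$-module $H_1(A,\Z)=\Lambda^g$ of rank $g$, one has $\deg(M)=\Norm_{K/\Q}(\det M)=\det M\cdot\ov{\det M}$. For hermitian $M$ the determinant satisfies $\ov{\det M}=\det({}^t\ov M)=\det M$, so $\det M\in\calO_K\cap\R=\Z$ and $\deg(M)=(\det M)^2$; positive definiteness forces $\det M>0$. Hence $\phi_{\calL}$ is principal, i.e.\ $\deg\phi_{\calL}=1$, iff $\det M=1$. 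Finally, when $\det M=1$ the adjugate formula shows $M^{-1}\in\sM_g(\calO_K)$, so $M\in\GL_g(\calO_K)$, confirming that $a_0 M$ is then genuinely an isomorphism. The main obstacle I anticipate is the analytic bookkeeping in (i)---pinning down the constant $c$ and verifying that $a_0^{-1}\phi_{\calL}$ really corresponds to left multiplication by $M$ in $H_M$ rather than by $\ov M$ or ${}^tM$---together with a careful justification of $\deg(M)=\Norm_{K/\Q}(\det M)$, which rests on the structure of $H_1(A,\Z)$ as a projective $\calO_K$-module and the compatibility of $\Norm_{K/\Q}$ with the $\Z$-linear determinant.
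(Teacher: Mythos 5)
Your proposal is correct, but note that the paper never proves this proposition at all: it is stated with the bare citation to Mumford \cite[p.209]{mumford}, and the only ingredient the paper records (in Sec.\ \ref{poen}) is precisely your step (i), namely that ample classes correspond to totally positive elements of the formally real Jordan algebra $NS(A)\otimes\R\simeq\End(A)^s\otimes\R$, which for $K\otimes_\Q\R\simeq\C$ is the cone of positive definite hermitian matrices. So your contribution beyond the paper is the explicit bookkeeping of (ii): the identification of the inverse of Milne's isomorphism with $M\mapsto a_0M$, and the degree computation $\deg(a_0M)=\deg(M)=\Norm_{K/\Q}(\det M)=(\det M)^2$ with $\det M>0$ forced by positivity, giving the equivalence between $\det M=1$ and principality. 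That argument is sound, and two of your worries can be dismissed: the transpose/conjugate ambiguity is harmless for the statement, since $M$, ${}^t M$ and $\ov M$ are simultaneously positive definite (indeed the paper's own computation in Sec.\ \ref{link} finds $c_1(\calL)={}^tM\,H_0$); and projectivity of $H_1(E,\Z)$ over $\calO_K$ is not needed, because the $\Z$-determinant of $M$ on $\Lambda^g$ can be computed after tensoring with $\Q$, where $\Lambda\otimes\Q\simeq K$, giving $\Norm_{K/\Q}(\det M)$ directly. One genuine caveat: the proposition is stated (and later applied, e.g.\ over finite fields in Sec.\ 4) for an arbitrary base field $k$, so your analytic verification of (i) via hermitian forms on $\C^g/\Lambda^g$ only covers $k\subset\C$; for the general case you should rely on your first, algebraic argument (Mumford's characteristic-free criterion), and likewise compute $\deg(M)$ on Tate modules rather than on $H_1$.
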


\begin{remark}
Similar results are used in \cite{langepp} to study the number of principal polarizations on product of elliptic curves over $\C$ without complex multiplication.
\end{remark} 

Recall that for any abelian variety $A$, a polarization $a$ on $A$ is called \emph{decomposable} if there exist two abelian varieties $A_1,A_2$ with polarizations $a_1,a_2$ and an isomorphism $\psi : A \to A_1 \times A_2$ such that $a=\hat{\psi} \circ (a_1 \times a_2) \circ \psi$. One says that $A$ is \emph{absolutely indecomposable} if it is not decomposable over $\ov{k}$.

\begin{proposition} \label{indecomp}
Let $A = E^g$ as before. Let $a$ be a principal polarization on $A$ and $M=a_0^{-1} a \in \sM_g(\calO_K)$. If the polarization $a$ is absolutely indecomposable then $M$ is \emph{indecomposable}, i.e. there is no matrix $P \in \GL_g(\calO_K)$ and two square matrices $M_i \in \GL_{g_i}(\calO_K)$ with $g_1+g_2=g$ such that
\begin{equation} \label{rel_decomp}
M  = {^t  \ov{P}}  \left(\begin{array}{cc} M_1 & 0 \\ 0 & M_2 \end{array}\right) P.
\end{equation}
\end{proposition}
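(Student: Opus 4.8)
The plan is to prove the contrapositive: if the matrix $M = a_0^{-1} a$ is decomposable in the sense of \eqref{rel_decomp}, then the polarization $a$ is decomposable over $\overline{k}$, contradicting absolute indecomposability. So suppose there exist $P \in \GL_g(\calO_K)$ and blocks $M_1 \in \GL_{g_1}(\calO_K)$, $M_2 \in \GL_{g_2}(\calO_K)$ with $g_1 + g_2 = g$ such that $M = {}^t\ov{P}\, \mathrm{diag}(M_1,M_2)\, P$. The key observation is that $P \in \GL_g(\calO_K) = \Aut_{\overline{k}}(A)$ acts as an automorphism of $A = E^g$, and under the dictionary of Prop.~\ref{pp} and the identification $\End(A)^s \simeq NS(A)$, conjugation by $P$ on hermitian matrices corresponds exactly to pulling back polarizations along the automorphism $P$.

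First I would make this dictionary precise. The isomorphism $NS_{\ov k}(A) \xrightarrow{\sim} \End_{\ov k}(A)^s$ sends a line bundle $\calL$ to $a_0^{-1}\phi_{\calL}$, so $a$ corresponds to $M$ and the block form $\mathrm{diag}(M_1,M_2)$ corresponds to a product polarization $a_1 \times a_2$ on $E^{g_1} \times E^{g_2}$, where $a_i \leftrightarrow M_i$ under the analogous bijection for $E^{g_i}$ (here I use that $M_i$ being a block of a positive definite hermitian matrix of determinant $1$ is itself positive definite hermitian, and after scaling corresponds to a principal polarization; if $\det M_i \ne 1$ one works with a non-principal polarization, which still suffices for decomposability). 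Next I would verify the compatibility of conjugation by $P$ with pullback: for $\psi = P \in \Aut_{\ov k}(A)$ and a line bundle $\calN$ on $A$, one has $\phi_{\psi^*\calN} = \hat\psi \circ \phi_{\calN} \circ \psi$, and translating through $a_0$ (whose associated element is the identity matrix, fixed by $\dag$ since $a_0$ is the product polarization) gives that $a_0^{-1}\phi_{\psi^*\calN}$ equals ${}^t\ov{P}\,(a_0^{-1}\phi_{\calN})\,P$. This is the algebraic shadow of the identity \eqref{rel_decomp}.

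Combining these, the relation $M = {}^t\ov{P}\,\mathrm{diag}(M_1,M_2)\,P$ says precisely that $a = \psi^*(a_1 \times a_2)$ as polarizations on $A$, where $\psi$ is the composite of $P$ with the identification $A = E^g \simeq E^{g_1} \times E^{g_2}$. In terms of the definition of decomposable recalled just before the statement, taking $A_1 = E^{g_1}$, $A_2 = E^{g_2}$ with polarizations $a_1, a_2$ and $\psi$ as above exhibits $a = \hat\psi \circ (a_1 \times a_2) \circ \psi$. Hence $a$ is decomposable over $\ov k$, contradicting the hypothesis that it is absolutely indecomposable.

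I expect the main obstacle to be bookkeeping rather than conceptual: one must check that conjugation by $P$ and the Rosati involution interact correctly, i.e. that ${}^t\ov{P}\,(\cdot)\,P$ really is the matrix incarnation of $\calN \mapsto \psi^*\calN$ and that it carries $\End(A)^s$ into itself (which follows since ${}^t\ov{({}^t\ov P M P)}} = {}^t\ov P\,{}^t\ov M\,P$ equals ${}^t\ov P M P$ when $M$ is hermitian). A secondary subtlety is ensuring the blocks $M_i$ genuinely define (possibly non-principal) polarizations, i.e.\ that positive definiteness is inherited by principal submatrices of a positive definite hermitian matrix; this is standard linear algebra. Once these compatibilities are in place, the equivalence between the matrix-level decomposition \eqref{rel_decomp} and the geometric decomposition of $(A,a)$ is immediate.
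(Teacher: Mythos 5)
Your proof is correct and takes essentially the same route as the paper's: pass to the contrapositive, regard $P$ as an automorphism $\psi$ of $E^g$, use the Rosati identity ${}^t\ov{P}=\psi^{\dag}=a_0^{-1}\hat{\psi}a_0$ to rewrite \eqref{rel_decomp} as $a=\hat{\psi}\circ(a_1\times a_2)\circ\psi$ with $a_i={a_0}_{|E^{g_i}}M_i$, contradicting absolute indecomposability. The only (harmless) divergence is at the end: the paper invokes Prop.~\ref{pp} to conclude that the blocks $M_i$ are themselves principal polarizations, while you note that positive definiteness of the blocks already yields polarizations, principal or not, which suffices for decomposability as defined.
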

\begin{proof}
Let us assume the existence of such $P,M_1,M_2$ and let $\psi$ be the automorphism of $E^g$ associated to $P$. Let  $a_i={a_0}_{|E^{g_i}} M_i : E^{g_i} \mapsto \hat{E}^{g_i}$. As ${^t \ov{P}}=\psi^{\dag}=a_0^{-1} \hat{\psi} a_0$, relation \eqref{rel_decomp} then becomes
$$M=a_0^{-1} a = a_0^{-1} \hat{\psi} (a_1 \times a_2) \psi,$$
so $a=\hat{\psi}\circ  (a_1 \times a_2) \circ \psi$.
As $M$ is the matrix of a principal polarization, Prop.\ref{pp} shows that $M_i$  also are and so the $a_i$ are principal polarizations on $A_i=E^{g_i}$.  So we see that $a$ is decomposable.\\
\end{proof}
When $g \leq 3$, an indecomposable principal polarization is a necessary and sufficient  condition to be geometrically a Jacobian.
\begin{theorem}[{\cite{Hoyt}, \cite{Ueno}}] \label{hu}
Let $(A,a)$ be a principally polarized abelian variety of dimension $g \leq 3$ over an algebraically closed field $k$. Then $(A,a)$ is a Jacobian if and only if $a$ is indecomposable.
\end{theorem}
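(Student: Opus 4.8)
The plan is to prove the two implications by quite different means: the ``only if'' direction is a soft statement about theta divisors that holds for all $g$, while the ``if'' direction rests on a dimension count for the Torelli morphism in the special range $g\le 3$.

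For the ``only if'' direction I would use the standard dictionary (see \cite{bl}) that a principally polarized abelian variety is decomposable exactly when its theta divisor is reducible. Concretely, the theta divisor of a nontrivial product $(A_1,a_1)\times(A_2,a_2)$ is $\Theta_1\times A_2 \;\cup\; A_1\times\Theta_2$, which is reducible, whereas the theta divisor of $\Jac(C)$ for a smooth connected curve $C$ is a translate of the image $W_{g-1}$ of the $(g-1)$-fold symmetric power of $C$ under the Abel--Jacobi map, which is irreducible because $C$ is. Hence a Jacobian is always indecomposable, with no restriction on $g$.

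For the converse I would argue geometrically on moduli, using the Torelli morphism $t:\Mm_g\to\Am_g$. The case $g=1$ is trivial. For $g\in\{2,3\}$ one has the numerical coincidence $\dim\Mm_g=3g-3=g(g+1)/2=\dim\Am_g$; since $t$ is injective on points (Torelli, over any algebraically closed field), its image is a dense constructible subset of the irreducible space $\Am_g$. To turn density into surjectivity I would pass to compactifications: the Jacobian map extends to a proper morphism $\overline{\Mm}_g\to\overline{\Am}_g$ between proper compactifications, under which the locus $\Mm_g^{\mathrm{ct}}$ of stable curves of compact type is precisely the preimage of the open stratum $\Am_g$. The restriction $\Mm_g^{\mathrm{ct}}\to\Am_g$ is then proper, so its image is closed in $\Am_g$; as it contains the dense set $t(\Mm_g)$ it must be all of $\Am_g$. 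Thus every $(A,a)$ with $g\le 3$ is the Jacobian of some compact-type stable curve $C$. Finally, for such a $C$ one has $\Jac(C)\cong\prod_i\Jac(C_i)$ with the product polarization, the product running over the positive-genus components; if $(A,a)$ is indecomposable this forces a single positive-genus component, i.e. $(A,a)=\Jac(C')$ for a smooth curve $C'$, as desired.

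The hard part is this global step: controlling the extension of the Jacobian map to compactifications and the resulting properness. This is where the hypothesis $g\le 3$ is genuinely used, through the coincidence $\dim\Mm_g=\dim\Am_g$ that makes $t(\Mm_g)$ dense (for $g\ge 4$ the Jacobian locus is a proper subvariety and the statement fails). A secondary technical point specific to $g=3$ is that the Torelli morphism ramifies along the hyperelliptic locus as a map of stacks (a non-hyperelliptic Jacobian has the extra automorphism $-1$ relative to its curve, while for a hyperelliptic curve $-1$ is the hyperelliptic involution); one must check that this does not disturb the dimension count on coarse spaces and that both the hyperelliptic and the plane-quartic strata are reached, so that the smooth curve $C'$ produced above may be of either type. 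In positive characteristic one also needs the stable-reduction and compactification inputs, all of which are available.
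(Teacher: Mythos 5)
The paper does not prove this statement at all: Theorem~\ref{hu} is quoted as a known result with references to Hoyt and to Oort--Ueno, so there is no internal proof to compare against. Your proposal is, in substance, a correct reconstruction of the argument in those cited sources (essentially Oort--Ueno's): the ``only if'' direction via irreducibility of the theta divisor $W_{g-1}$ of a Jacobian versus reducibility of the theta divisor of a product, and the ``if'' direction via the coincidence $\dim\Mm_g=3g-3=g(g+1)/2=\dim\Am_g$ for $g=2,3$, injectivity of Torelli on geometric points, and properness of the compact-type Torelli map $\Mm_g^{\mathrm{ct}}\to\Am_g$ to upgrade density of the image to surjectivity, followed by the observation that the Jacobian of a compact-type curve is the polarized product of the Jacobians of its positive-genus components. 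You correctly isolate the genuinely hard input, namely that the Torelli morphism extends to (arithmetic) compactifications with $\Mm_g^{\mathrm{ct}}$ as the preimage of $\Am_g$; in arbitrary characteristic this rests on Namikawa/Faltings--Chai/Alexeev-type results, and one also needs irreducibility of $\Am_g$ in characteristic $p$ for the density step. A minor simplification worth noting: the properness step can be replaced by a direct valuative-criterion argument (specialize along a DVR, apply stable reduction to the generic family of smooth curves, and use uniqueness of semi-abelian reduction to identify the limit Jacobian with the given $(A,a)$, forcing compact type), which avoids invoking the compactified moduli of abelian varieties altogether.
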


When $k$ is a finite field, Serre in \cite[Appendix]{lauterg3} gives a  beautiful description of abelian varieties and their polarizations in the case where $A$ is only \emph{isogenous} to $E^g$ and $E$ is an ordinary elliptic curve for which $\calO_K \simeq \End(E)=\Z[\pi]$ with $\pi$ the Frobenius endomorphism of $E$ over $k$.
 If we denote by $L=\Hom(E,A)$ and $L^*=\Hom_{\ov{\calO_K}}(L,\calO_K)$, a polarization $a$ on $A$ induces  a sesquilinear form $h: L \to L^*$ :
$$\begin{array}{cccccc}
h :  & L=\Hom(E,A) &  \to &\Hom(E,\hat{A}) &\to & L^* \\
& f & \mapsto & a f   & \mapsto& ( g \mapsto a_{E}^{-1} \hat{g} a f)
\end{array}$$
whose associated bilinear form is a positive definite hermitian form $L \times L \to \calO_K$.  
The relation with Prop.\ref{pp} in the case $A=E^g$ is the following. If $M=a_0^{-1} a \in \End(A)$ is the matrix of a polarization given by Prop.\ref{pp} then we can define
$$\begin{array}{cccccccc}
h : & L=\Hom(E,A) & \to & L=\Hom(E,A) &  \to &\Hom(E,\hat{A}) &\to & L^* \\
& f & \mapsto & Mf & \mapsto & a_0 Mf & \mapsto& ( g \mapsto a_{E}^{-1} \hat{g} a_0 M f).
\end{array}$$

Now if we identify $L$ with $\calO_K^g$, we can write $$(a_{E}^{-1} \hat{g} a_0) M f={^t \ov{g}} M f.$$
Thus in this basis, the matrix of the bilinear form associated to $h$ is exactly $M$. So, in the case $A=E^g$ over a finite field, we can confuse both points of view and use the following results.

\begin{lemma}[{\cite[Appendix]{lauterg3}}] \label{onecurve}
Assume that $\calO_K$ is principal. If $A$ is isogenous to $E^g$ then $A$ is actually isomorphic to $E^g$.
\end{lemma}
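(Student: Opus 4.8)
The plan is to show that the single $\calO_K$-module $L=\Hom(E,A)$ introduced above determines $A$ up to $k$-isomorphism, and that the hypothesis on $\calO_K$ forces this module to be the one attached to $E^g$. First I would record the structure of $L$. As a $\Hom$-group of abelian varieties it is a free $\Z$-module of finite rank, hence finitely generated over $\calO_K=\End(E)$; embedding it in the $K$-vector space $\Hom(E,A)\otimes\Q$ shows it is torsion-free over $\calO_K$. Since $A$ is isogenous to $E^g$ we have $\Hom(E,A)\otimes\Q\cong\Hom(E,E^g)\otimes\Q=K^g$, so $L$ has rank $g$ over $\calO_K$. Over a principal ring every finitely generated torsion-free module is free, so the hypothesis that $\calO_K$ is principal gives
\[
L=\Hom(E,A)\cong\calO_K^{\,g}\cong\Hom(E,E^g)
\]
as $\calO_K$-modules.

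It remains to promote this isomorphism of modules to an isomorphism $A\cong E^g$ of abelian varieties, and here I would use the reconstruction of $A$ from $L$ supplied by the theory of ordinary abelian varieties: since $E$ is ordinary and $\End(E)=\calO_K$ is commutative, the functor $B\mapsto\Hom(E,B)$ from abelian varieties $k$-isogenous to a power of $E$ to finitely generated projective $\calO_K$-modules is an equivalence, a quasi-inverse being the Serre tensor construction $M\mapsto M\otimes_{\calO_K}E$, which sends $\calO_K^{\,g}$ to $E^g$. Concretely, the natural evaluation morphism
\[
\mu:\ L\otimes_{\calO_K}E\longrightarrow A,\qquad f\otimes P\longmapsto f(P),
\]
is an isomorphism. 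That $\mu$ is at least a surjective isogeny is elementary: the images $f(E)$ for $f\in\Hom(E,A)$ generate $A$ — already the $g$ components of any isogeny $E^g\to A$ do — and source and target both have dimension $g$. That $\mu$ actually has trivial kernel is the substance of the equivalence. Feeding $L\cong\calO_K^{\,g}$ into $\mu$ and using $\calO_K^{\,g}\otimes_{\calO_K}E\cong E^g$ then yields $A\cong E^g$.

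The main obstacle is precisely the input used in the last step, namely that $\Hom(E,-)$ is fully faithful on the isogeny class of $E^g$, equivalently that $\Hom(A,B)\to\Hom_{\calO_K}(\Hom(E,A),\Hom(E,B))$ is bijective; this is where ordinarity of $E$, and the maximality of $\calO_K=\End(E)$ (automatic under the principality hypothesis, since a principal ring is integrally closed), genuinely enter. I would either cite this from the structure theory of ordinary abelian varieties over finite fields, or verify it by hand by checking the bijection directly for $A=E^m$, $B=E^n$ and propagating it along isogenies. Everything preceding that step is formal module theory over a principal ring, so I expect no difficulty there.
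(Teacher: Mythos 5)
Your proposal is correct and takes essentially the same route as the paper, which gives no independent argument but cites Serre's appendix to \cite{lauterg3}: there the functor $A \mapsto \Hom(E,A)$ (with quasi-inverse the Serre tensor construction $L \mapsto L\otimes_{\calO_K}E$) is shown to be an equivalence onto finitely generated torsion-free $\calO_K$-modules, and principality of $\calO_K$ then forces $\Hom(E,A)\cong \calO_K^g$, hence $A\cong E^g$. The one step you defer to a citation --- that $\Hom(E,-)$ is fully faithful and the evaluation map is an isomorphism, which indeed uses that $E$ is ordinary and $\End(E)=\Z[\pi]$ is the maximal order --- is exactly the content of the appendix the paper itself invokes, so your treatment matches the paper's.
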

\begin{lemma}[{\cite[Appendix]{lauterg3}}] \label{inde=inde}
Assume that $\calO_K$ is principal. With the notation of Prop.\ref{indecomp}, $a$ is absolutely indecomposable if and only if $M$ is indecomposable.
\end{lemma}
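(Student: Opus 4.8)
The plan is to prove the contrapositive of each implication, i.e. to show that $a$ is absolutely decomposable if and only if $M$ is decomposable in the sense of \eqref{rel_decomp}. The backward implication is already contained in Proposition~\ref{indecomp}: its proof shows that a decomposition of $M$ produces an isomorphism $\psi$ over $k$ realising $a=\hat\psi\circ(a_1\times a_2)\circ\psi$, so that $a$ is decomposable over $k$ and a fortiori over $\ov{k}$. Hence only the forward implication, $a$ absolutely decomposable $\Rightarrow$ $M$ decomposable, needs a new argument, and the whole difficulty lies in the word \emph{absolutely}.

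The decisive point I would isolate first is that passing to $\ov{k}$ changes nothing in the relevant structures. Since $E$ is ordinary, all its geometric endomorphisms are already defined over $k$, so $\End_{\ov{k}}(E)=\End_k(E)=\calO_K$, which remains principal. Consequently $\Hom_{\ov{k}}(E,A)\cong\calO_K^g$, Proposition~\ref{pp} applies verbatim over $\ov{k}$, and Lemma~\ref{onecurve} is available over every finite extension of $k$.

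For the forward implication I would start from an absolute decomposition: there is a finite extension $k'/k$ and an isomorphism of principally polarized abelian varieties $(A,a)\cong(A_1,a_1)\times(A_2,a_2)$ over $k'$, with $\dim A_i=g_i$ and $g_1+g_2=g$. Because $A\cong E^g$, each factor $A_i$ is isogenous to $E^{g_i}$ over $k'$; applying Lemma~\ref{onecurve} over $k'$ (legitimate by the previous paragraph) gives $A_i\cong E^{g_i}$. The polarization $a_i$ is again principal, so by Proposition~\ref{pp} it corresponds to a positive definite hermitian matrix $M_i\in\sM_{g_i}(\calO_K)$ of determinant $1$, while the isomorphism $\psi:E^g\xrightarrow{\sim}E^{g_1}\times E^{g_2}$ is given by a matrix $P\in\GL_g(\End_{k'}(E))=\GL_g(\calO_K)$.

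Finally I would run the computation in the proof of Proposition~\ref{indecomp} backwards: translating $a=\hat\psi\circ(a_1\times a_2)\circ\psi$ through $a_0^{-1}\hat\psi a_0={}^t\ov{P}$ and $a_i=a_0|_{E^{g_i}}M_i$ yields precisely the relation \eqref{rel_decomp}, so $M$ is decomposable. The main obstacle, and the reason the ordinary and principal hypotheses are indispensable, is ensuring that the geometric factorization is defined over a field for which $\End(E)$ has not grown and for which $\Hom(E,A_i)$ is still free of rank $g_i$; this is exactly what forces the change of basis $P$ and the blocks $M_i$ to have entries in $\calO_K$, so that the geometric notion of absolute decomposability collapses onto the arithmetic condition \eqref{rel_decomp} over $\calO_K$.
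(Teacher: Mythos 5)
Your overall architecture is sensible: reduce to the contrapositive, use Proposition~\ref{indecomp} for one direction, and note that the identity \eqref{rel_decomp} is a statement about matrices over $\calO_K$, so it can be established after a base extension. But there is a genuine gap at the decisive step: the claim that ``Lemma~\ref{onecurve} is available over every finite extension of $k$'' is not only unjustified, it is false. The standing hypothesis under which Lemma~\ref{onecurve} is quoted from Serre's appendix (recalled in the paper just before its statement) is not merely that $\End(E)=\calO_K$ is principal, but that $\End(E)=\Z[\pi]$ where $\pi$ is the Frobenius of $E$ \emph{over the base field}. This hypothesis is destroyed by base change: over $k'=\GF_{q^n}$ the Frobenius becomes $\pi^n$, and $\Z[\pi^n]$ is in general a proper suborder of $\calO_K$. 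For instance, in the paper's own application ($q=47$, $K=\Q(\sqrt{-19})$, trace $13$), one finds $\mathrm{disc}(\Z[\pi^2])=-19\cdot 13^2$, so $\Z[\pi^2]=\Z+13\,\calO_K\subsetneq\calO_K$. Whenever $\Z[\pi^n]\subsetneq\calO_K$, Waterhouse's theory \cite{waterhouse} produces, for every intermediate order $\Z[\pi^n]\subseteq R\subsetneq\calO_K$, an elliptic curve over $k'$ isogenous to $E_{k'}$ with endomorphism ring $R$, hence not isomorphic to $E_{k'}$. So the conclusion of Lemma~\ref{onecurve} already fails for $g=1$ over $k'$; your justification (that $\End_{\ov{k}}(E)=\End_k(E)=\calO_K$ remains principal) addresses the wrong hypothesis, and the inference ``$A_i$ isogenous to $E^{g_i}$ over $k'$, hence isomorphic to $E^{g_i}$'' is unsupported.

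The statement you need is nevertheless true, because $A_i$ is not merely isogenous to a power of $E$ but is a \emph{direct factor} of $E^g_{k'}$, and this is where the argument should run. A decomposition $(A,a)_{k'}\cong (A_1,a_1)\times(A_2,a_2)$ yields orthogonal idempotents $e$ and $1-e$ in $\End_{k'}(A_{k'})$ that are symmetric for the Rosati involution of $a$. Since $E$ is ordinary, every geometric endomorphism commutes with Frobenius and so $\End_{k'}(A_{k'})=\End_{\ov{k}}(A_{\ov{k}})=\sM_g(\calO_K)=\End_k(A)$: the idempotents, hence the whole polarized decomposition, are already defined over $k$. This descent is the real content of the word ``absolutely''. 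After it, you may legitimately apply Lemma~\ref{onecurve} over $k$, where its hypothesis $\End(E)=\Z[\pi]$ holds, and your concluding matrix computation finishes the proof. Alternatively you can bypass Lemma~\ref{onecurve} and Proposition~\ref{pp} entirely: over the principal ring $\calO_K$ the summands $e\calO_K^g$ and $(1-e)\calO_K^g$ are free, so some $Q\in\GL_g(\calO_K)$ conjugates $e$ into the standard diagonal idempotent, and the Rosati-symmetry condition ${}^t\ov{e}\,M=Me$ says precisely that ${}^t\ov{Q}MQ$ is block diagonal, which is \eqref{rel_decomp}.
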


\subsection{The case $A= E^g$ over $\C$} \label{link}
Let $k \subset \C$. As in Sec.\ref{poen}, we suppose that $E : y^2+a_1 xy+a_3y=f(x)$, $a_1,a_3 \in k$,$f \in k[x]$ is an elliptic curve  with all endomorphisms defined over $k$ and that $\End(E)$ is an order $\calO_K$ in an imaginary quadratic field $K$.\\

Let us choose a basis  $\delta_1,\delta_2$ of $H_1(E,\Z)$ such that $\omega_i=\int_{\delta_i} dx/(2y+a_1x+a_3)$ satisfy  $\tau=\omega_1/\omega_2  \in \H_1$. Let $\Omega_{E}=[\omega_1,\omega_2]$ and $a_E=\phi_{\calL_E}$ the principal polarization on $E$. 
We denote  by $I_g$ the identity matrix of $\C^g$ and 
 $$J_{2g}= \left(\begin{array}{cc} 0 & I_g \\ -I_g & 0 \end{array}\right).$$
 
 Since $a_E$ is given by the intersection product $T=J_2$,  by \cite[Lem.4.2.3]{bl}, its first Chern class is the hermitian form
$$H_{E}:=c_1(\calL_E)=2i(\ov{\Omega_{E}} T^{-1} {^t \Omega_{E}})=
\frac{1}{\im\left(\omega_1 \ov{\omega_2}\right)} I_1.$$

Let now $A=E^g=\C^g/\Omega_0 \Z^{2g}$,
where $$\Omega_0= \left(\begin{array}{cccccc} \omega_1 & & &   \omega_2 & &\\  & \ddots & &  &\ddots &  \\  & & \omega_1 & & & \omega_2 \end{array}\right).$$
Let $a_0=a_E^g=\phi_{\calL_0}$ be the product polarization on $A$. An easy computation shows that
$$H_0=c_1(\calL_0)=
\frac{1}{\im\left(\omega_1 \ov{\omega_2}\right)} I_g.$$

We now consider $M \in \sM_g(\calO_K)$ defining a principal polarization $a=a_0 M=\phi_{\calL}$ on $A$. By \cite[Lem.2.4.5]{bl}
$$H:=c_1(\calL)={^t M} H_0=\frac{1}{\im\left(\omega_1 \ov{\omega_2}\right)} {^t M}.$$
The imaginary part of $H$ is an alternating form with integral values on the lattice $\Lambda=\Omega_0  \Z^{2g}$. Its matrix in the chosen basis of $\Lambda$ is
$$T=\im\left({^t \Omega_0} H \ov{\Omega_0}\right)=\im\left(\begin{array}{cc}
\omega_1 H \ov{\omega_1} & \omega_1 H \ov{\omega_2} \\ \omega_2 H \ov{\omega_1} & \omega_2 H \ov{\omega_2} \end{array}\right).$$
We can make this computation a bit more explicit. In the case where $H$ has real coefficients, we get very easily that
$$T=\left(\begin{array}{cc} 0 & M  \\ -M & 0 \end{array}\right).$$
Otherwise, using the fact that ${^t M}=\ov{M}$, we obtain
\begin{eqnarray}
T &=& \left(\begin{array}{cc} |\omega_1|^2 \im H & \re(\omega_1 \ov{\omega_2}) \im H + \im(\omega_1 \ov{\omega_2}) \re H    \\
\re(\omega_1 \ov{\omega_2}) \im H - \im(\omega_1 \ov{\omega_2}) \re H &  |\omega_2|^2 \im H
\end{array}\right) \nonumber \\
&=& \left(\begin{array}{cc} \frac{-\im {^t M} }{\im 1/\tau} &  \frac{\re \tau}{\im \tau} \im{^t M} + \re{^t M} \\
 \frac{\re \tau}{\im \tau} \im{^t M} - \re{^t M} & \frac{\im{^t M} }{\im \tau} \end{array}\right) \nonumber\\
 &=&  \left(\begin{array}{cc} \frac{\im M}{\im 1/\tau} &  \re M - \frac{\re \tau}{\im \tau} \im M \\
 -{^t \left( \re{ M} - \frac{\re \tau}{\im \tau} \im{ M}\right)} &  -\frac{\im{ M} }{\im \tau} \end{array}\right). \label{Aform}
\end{eqnarray}

As $T$ is a non degenerate alternating form associated to the principal polarization $a$, there is a matrix $B \in \sM_{2g}(\Z)$ such that $B T {^t B}=J_{2g}$. $B$ is defined up to the action of the symplectic group $\Sp_{2g}(\Z)$. The columns of ${^t B} \Z^{2g}$ form a symplectic basis for the polarization $a$. Hence $\Omega_a=\Omega_0 {^t B}$ is  a period matrix associated to $a$ and if we write $\Omega_a=[\Omega_1,\Omega_2]$ with $\Omega_i \in M_g(\C)$, $\tau_a=\Omega_2^{-1} \Omega_1$ is a Riemann matrix associated to $a$.\\
Note that, if $\Omega_0=[Z_1,Z_2]$ with $Z_2$ invertible  and if,
 as usually,  we write $B=\left(\begin{array}{cc} \alpha & \beta \\ \gamma & \delta \end{array}\right)$
and $\tau_0=Z_2^{-1}Z_1$,  then
$\tau_a=B.\tau_0:=(\alpha \tau_0 +\beta)( \gamma \tau_0+\delta)^{-1}$.

\section{Explicit computations of $\chi_{18}$} \label{secexplicit}
We recall the hypothesis of Sec.\ref{Apower} and \ref{link}. Let $k$ be a number field and let $E : y^2+a_1 xy+a_3 y=f(x)$, $a_1,a_3 \in k, f\in k[x]$ be an elliptic curve with CM by an order  $\calO_K$ contained in an imaginary quadratic field $K=\Q(\sqrt{-d})$ with $d>0$ square free. We assume that all endomorphisms of $E$ are defined over $k$.
Let $\omega_E=dx/(2y+a_1x+a_3)$ be a regular differential on $E$. Prop.\ref{indecomp} shows that $A=E^3$ can be a Jacobian over $\C$ if there exists an indecomposable positive definite hermitian form $M \in \sM_3(\calO_K)$ of determinant $1$. For such an M, we want to compute
$$\chi:=\chi_{18}((A,a_0 M),\omega_0)$$
where $$\omega_0=p_1^*(\omega_E) \wedge p_2^*(\omega_E) \wedge p_3^*(\omega_E)$$
with $p_i: E^3 \to E$ the $i$th projection.\\

We will give examples in the case where $\calO_K$ is maximal and principal. This gives the possibilities $d=3,4,7,8,11,19,43,67,163$.  By \cite[p.418]{hoffmann}, there is no form $M$ as above if $d=3,4,8,11$. We then restrict ourselves to  cases which are left. 

\subsection{Choice of the models} \label{explicit}
Let us make the following elementary remark. If $u : E \to E'$ is an isomorphism such that $u^* \omega_{E'}=\alpha \omega_E$ then $\chi_{18}((E'^3,a'),\omega_0')=\alpha^{54} \chi_{18}((E^3,u^* a),\omega_0)$ with $\omega'_0=p_1^*(\omega_{E'}) \wedge p_2^*(\omega_{E'}) \wedge p_3^*(\omega_{E'})$. Hence we need to fix a precise model for our computations. \\
We will denote $E(d)$ the elliptic curve with CM by the maximal order of $\Q(\sqrt{-d})$ with minimal conductor, which is $d^2$ (see \cite[p.21]{gross}) and take $E=E(d)$. We recall the equations from \cite[p.82-84]{gross} in Table \ref{grossmodel}.\\
We use these models for two reasons. One is that, with a view to applications to optimal curves, we need a nice formula for the trace of $E$. As we can see from Lem.\ref{nbpoints}, the curves $E(d)$ have the simplest expression among all their twists. Another, more important, reason is that for this choice we expect the value of $\chi$ to be  in $\calO_K$ and  `minimal'. Indeed, since only $d$ divides the discriminant of $E(d)$, $E(d)$ is actually  an abelian scheme over $\Z[1/d]$, so the value of $\chi$ is in $\calO_K[1/d]$. Using a good arithmetic compactification  $\ov{\Am}_3$ of $\Am_3$, we see that $E(d)$ is a semi-abelian scheme, so defines a point of $\ov{\Am}_3 \otimes \Z$. Hence, the value of $\chi$ is actually in $\calO_K$. Also, as explained in Sec.\ref{hyperelliptic}, if a prime divides $\chi$, we can expect to have hyperelliptic or bad reduction. Thus, if a prime divides the discriminant of $E$, we conjecture that this prime also appears in $\chi$ (and this is always the case in our examples). Choosing $E(d)$, which has minimal conductor, we hope for a `minimal result'.

\begin{lemma}[{\cite[p.32]{gross}}] \label{nbpoints}
Let $d=7,19,43,67$ or $163$. Let $E(d)$ be the elliptic curve given in Table \ref{grossmodel}. The trace of Frobenius $\pi$ of $E(d) \otimes \GF_p$ where $p$ is a prime different from $d$ is
\begin{itemize}
\item $0$ if $\left(\frac{p}{d}\right)=-1$;
\item $a_p$ if $\left(\frac{p}{d}\right)=1$ where $a_p$ is the unique integer such that
$$4 p=a_p^2 + d b_p^2, \quad \left(\frac{2 a_p}{d}\right)=1.$$
\end{itemize}
\end{lemma}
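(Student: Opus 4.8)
The plan is to invoke the theory of complex multiplication — Deuring's reduction theorem together with the associated Gr\"ossencharacter — and to reduce the statement to an elementary computation in $\calO_K$. Throughout, $d \in \{7,19,43,67,163\}$ is a prime with $d \equiv 3 \pmod 4$, so that $K = \Q(\sqrt{-d})$ has discriminant $-d$ and $\calO_K = \Z[\tfrac{1+\sqrt{-d}}{2}]$; by the standing hypothesis $h_K = 1$.

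First I would settle the splitting dichotomy. Using quadratic reciprocity (extended to $p=2$ via the Kronecker symbol) together with $d \equiv 3 \pmod 4$, one checks that $\left(\frac{p}{d}\right) = \left(\frac{-d}{p}\right)$, the symbol governing the decomposition of $p$ in $\calO_K$. Since $p \neq d$, the prime $p$ is unramified, and $\left(\frac{p}{d}\right) = -1$ (resp.\ $+1$) precisely when $p$ is inert (resp.\ split). Because $E(d)$ is defined over $\Q$ with CM by $\calO_K$ and good reduction away from $d$, its $L$-function is $L(\psi,s)$ for a Hecke character $\psi$ of $K$ of infinity type $(1,0)$, with $a_p = \psi(\frakp) + \overline{\psi(\frakp)}$ at split primes and $a_p = 0$ at inert primes. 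The inert case of the lemma is then immediate: $\left(\frac{p}{d}\right) = -1$ forces $a_p = 0$ (equivalently, the reduction is supersingular and the attached CM form has vanishing $p$-th coefficient, which covers $p=2,3$ as well).

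For the split case write $p\calO_K = \frakp \bar{\frakp}$; since $h_K = 1$ we have $\frakp = (\pi)$ with $\pi = \tfrac{a + b\sqrt{-d}}{2} \in \calO_K$ and $a \equiv b \pmod 2$. From $N(\pi) = p$ one gets $a^2 + d b^2 = 4p$, while $a_p = \pi + \bar\pi = a$; hence $4p = a_p^2 + d b_p^2$ as claimed. This Diophantine equation determines the pair $\{a,b\}$ up to the signs of $a$ and $b$ (the only units are $\pm 1$ since $d>3$, and replacing $\frakp$ by $\bar{\frakp}$ merely flips the sign of $b$), so the sole remaining ambiguity is the sign of $a_p$.

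Pinning down that sign is the crux, and the step where the particular model $E(d)$ enters. The sign is governed by the value of $\psi$ on the principal ideal $\frakp$: writing $\psi((\alpha)) = \chi(\alpha)\,\alpha$ for a finite-order character $\chi$ modulo the conductor of $\psi$ (which for $E(d)$ is supported at $\sqrt{-d}$), the distinguished generator $\psi(\frakp)$ is the one among $\pm\pi$ selected by $\chi$, and $\chi$ is essentially the quadratic residue symbol modulo $d$. Making this precise shows that the correct generator is the one whose trace satisfies $\left(\frac{2a_p}{d}\right) = 1$; this is exactly the explicit reciprocity computation of \cite[p.32]{gross}, which I would cite (or reproduce) to conclude. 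I expect this sign determination — rather than the norm equation or the inert/split dichotomy — to be the main obstacle, as it is the only non-formal ingredient and depends on the precise normalization of the Weierstrass model recorded in Table~\ref{grossmodel}.
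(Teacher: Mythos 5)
Your proposal is correct and takes essentially the same route as the paper: the paper offers no independent argument for this lemma but simply cites the computation in Gross \cite[p.32]{gross}, which is precisely the sign determination via the Gr\"ossencharacter that you isolate as the crux. The surrounding steps you supply (the splitting dichotomy $\left(\frac{p}{d}\right)=\left(\frac{-d}{p}\right)$, the vanishing of $a_p$ at inert primes, and the norm equation $4p=a_p^2+db_p^2$ with the sign ambiguity resolved because $\left(\frac{-1}{d}\right)=-1$) are the standard CM-theoretic framework within which Gross's result sits, so the two treatments coincide in substance.
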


\begin{table}
\caption{Gross' models $E(d)$}
\label{grossmodel}
\begin{tabular}{|c|c|c|}
\hline
$d$ & model & discriminant  \\
\hline
$7$ & $y^2+xy=x^3-x^2-2x-1$ & $-7^3$  \\
\hline
$19$ & $y^2+y=x^3-2 \cdot 19 x+ \frac{19^2-1}{4}$ & $-19^3$  \\
\hline
$43$ & $y^2+y=x^3-2^2 \cdot 5 \cdot 43 x + \frac{3 \cdot 7 \cdot 43^2-1}{4}$ & $-43^3$  \\
\hline
$67$ & $y^2+y=x^3-2 \cdot 5 \cdot 11 \cdot 67 x + \frac{7 \cdot 31 \cdot 67^2-1}{4}$ & $-67^3$  \\
\hline
$163$ & $y^2+y=x^3-2^2 \cdot 5 \cdot 23 \cdot 29 \cdot 163 x+ \frac{7 \cdot 11 \cdot 19 \cdot 127 \cdot 163^2-1}{4}$ & $-163^3$  \\
\hline
\end{tabular}
\end{table}

\subsection{Details of the computations in the case $d=7$} \label{detail7}

Let $\delta_1,\delta_2$ be generators of $H_1(E(7) \otimes \C,\Z)$. We can choose these generators such that the periods $[\omega_1,\omega_2]=[\int_{\delta_1} dx/(2y+x),\int_{\delta_2} dx/(2y+x)]$ satisfy $\omega_1/\omega_2 \in \H_1$ (here we take $\omega_1/\omega_2=\tau:=(1+i \sqrt{7})/2$). According to the data in \cite{schiemann},  $$M=\left(\begin{array}{ccc} 2 & 1 & 1 \\ 1 & 2 & \ov{\tau} \\ 1 & \tau & 2 \end{array}\right)$$
is --up to isomorphisms-- the unique indecomposable positive definite hermitian  form of determinant $1$ in  $\sM_3(\calO_K)$. Using \eqref{Aform} we get
$$T=  \left(\begin{array}{cccccc} 0 & 0 & 0 & 2 & 1 & 1 \\
0 & 0 & 2 & 1 & 2 & 1 \\
0 & -2 & 0 & 1 & 0 & 2 \\
-2 & -1 & -1 & 0 & 0 & 0 \\
-1 & -2 & 0 & 0 & 0 & 1 \\
-1 & -1 & -2 & 0 & -1 & 0 \\
\end{array}\right).$$
We can now ask MAGMA what is the Frobenius form of this matrix
$$J,B:=\texttt{FrobeniusForm}(T)=\left(\begin{array}{cccccc}   0&  0&  0&  1&  0&  0 \\
 0&  0&  0&  0&  1&  0\\
 0&  0&  0&  0&  0&  1\\
-1&  0&  0&  0&  0&  0\\
 0& -1&  0&  0&  0&  0\\
 0&  0& -1&  0&  0&  0
 \end{array}\right),
 \quad
\left(\begin{array}{cccccc}   0  & 1 & 0 & 0 & 0 & 0\\
1 & 0& -2&  4&  0&  0\\
 1&  0& -2& -3&  3&  1\\
 0&  0&  0&  1&  0&  0\\
 0&  0&  0 &-2&  1&  0\\
 2& -1& -3& -2 & 4 & 0
\end{array}\right)
 $$
where $B$ is a matrix such that $B T {^t B}=J=J_6$. Thus with the notation of Sec.\ref{link}, if $a=a_0 M$, then $\Omega_a=[\Omega_1,\Omega_2]=\Omega_0 {^t B}$ and $$\tau_a=\left(\begin{array}{ccc}
2 \tau & \tau &0 \\ \tau & -3+2\tau/3 & 2/3 \\ 0 & 2/3 & \tau/6
 \end{array}\right).$$
The Riemann matrix $\tau_a$  offers bad convergence for the computation of Thetanullwerte. In order to speed it up, we use the MAPLE function $\texttt{Siegel}(\tau_a)$ (see \cite{deconinck}). It returns a Riemann matrix $\tau'$ and $B' \in \Sp_6(\Z)$ such that $\tau'=B'.\tau_a$. We then let 
$$\Omega'=[\Omega'_1,\Omega'_2]=\Omega_a {^t B'}.$$
Again, we use  MAPLE to compute the $36$ even Thetanullwerte with a given precision and  then an approximation of  the expression
$$\chi:=  \pi^{54} \cdot 2^{26} \cdot \frac{\chi_{18}(\tau')}{(\det \Omega'_2)^{18}}.$$
Doing this with $50$ digits of precision, we get
$\chi=(7^7)^2.$

\begin{remark} \label{kleinquartic}
We recognize the square of the discriminant of the Klein quartic $X_{7,1} : x^3y+y^3z+z^3x=0$ (see \cite[Sec.3]{LRZ} for a definition of the canonical discriminant). This is an example of Klein's formula (see \cite[Th.4.1.2]{LRZ}) and it is no surprise as  it is known that $\Jac(X_{7,1}) \simeq_{K} (E(7)^3,a_0 M)$.  
\end{remark}

\subsection{The tables} \label{tablesec}
In Tables \ref{others} and \ref{others2}, we gather the results we found  when $d=7,19,43,67$, $A=E(d)^3 \otimes \Q$ and $M$ runs over the  indecomposable positive definite hermitian forms of dimension $3$ and determinant $1$ with coefficients in $\Z[\tau]$ with  $\tau=(1+\sqrt{-d})/2$, up to isomorphisms (see \cite{schiemann}). As there are more than 100 possibilities for $d=163$ we include only two cases, one with trivial automorphism group and the other with the only automorphism group of order $12$. Let us give some precisions on the contents of these tables.
\begin{enumerate}
\item the first column contains the discriminant $d$ and the index of $M$ in the web tables of \cite{schiemann}.
\item $\chi_{18}((A,a_0 M),\omega_0)$ is computed as described in Sec.\ref{explicit} and Sec.\ref{detail7}. Note that since $(A,a_0 M)$ is defined over $\Z[\tau]$, this value may not be in $\Z$. In the tables we denote by $[a,b]$  the element $a+b \tau$. Also the second line gives the primes below $a+b \tau$ in the same order. For all these irrational cases, we find that there is a form in \cite{schiemann} for which we obtain the conjugate value. This form could have been taken as $\ov{M}$ but it is not the case in \textit{loc. cit.} Thus the first column contains a second index for the form corresponding to the conjugate value.
\item the third column gives the order of the automorphism group $G$ of $M$, i.e. the set of $Q \in \GL_3(\Z[\tau])$ such that ${^t \ov{Q}} M Q=M$. It is easy to see that such an automorphism $Q$ is actually an automorphism of the principally polarized abelian variety $(A,a)$. When $(A,a)$ is geometrically the Jacobian of a non hyperelliptic curve $C$, Torelli's theorem shows that the order of the automorphism group of $C$ is half the order  of $G$.
\end{enumerate}

We now give a dual point of view which was suggested to us by Serre. Let $E_{d,i}$ be the quadratic twist of $E(d)$ by the  the non square part $\delta$ of the value of $\chi$ corresponding to the form $M$ in the line $d,\# i$ of our tables.  By the remark at the beginning of Sec.\ref{explicit}, we see that  $\chi_{18}((E_{d,i}^3,a_0 M),\omega_0)=\delta^{27} \cdot \chi$ is now a square in  $K=\Q(\sqrt{-d})$, so there is a genus $3$ curve $X_{d,i}$ defined over $K$ such that $\Jac(X_{d,i}) \simeq E_{d,i}^3$.

\begin{example}
Let us consider the case $19, \#1$. The quadratic twist by $-2$ of $E(19)$ is 
$$E_{19,1} : y^2=x^3-152x-722.$$
Using the construction of \cite{guardia} applied to our period matrix, it is possible to compute a model $f=0$ for the curve $X_{19,1}$. Guardia found
{\small \begin{eqnarray*}
f &= & U^4+2 U^3 V-2 U^3 W+\left(6-3 i \sqrt{19}\right) U^2 V^2 
 +18 U^2 V W+\left(6+3 i \sqrt{19}\right) U^2 W^2 \\
 & & +\left(5-3 i
  \sqrt{19}\right) U V^3 
 +\left(15+3 i \sqrt{19}\right) U V^2 W+\left(-15+3 i \sqrt{19}\right) U V W^2 \\
 & & +\left(-5-3 i
  \sqrt{19}\right) U W^3 
 +\frac{1}{2} \left(3-3 i \sqrt{19}\right) V^4+\left(12+4 i \sqrt{19}\right) V^3 W-30 V^2
  W^2 \\
  & & +\left(12-4 i \sqrt{19}\right) V W^3+\frac{1}{2} \left(3+3 i \sqrt{19}\right) W^4=0.
  \end{eqnarray*}}
 One can check that the discriminant of $-f/2$ is $2^{19} \cdot 19^7$ which is indeed a square root of $\chi_{18}((E_{19,1}^3,a_0 M),\omega_0)=(-2)^{27} \cdot (2^5 \cdot 19^7)^2 \cdot (-2)$.
Note that this curve can be descended over $\Q$ and after some simplifications, one finds
 $$X_{19,1} : x^4+(1/9) y^4+(2/3) x^2 y^2-190 y^2-570 x^2+(152/9) y^3-152 x^2 y-1083=0.$$
\end{example}

\subsection{The question of hyperelliptic reduction} \label{hyperelliptic}

The knowledge of the value of $\chi$ is not sufficient to understand if the reduction modulo a prime dividing $\chi$ gives a hyperelliptic curve or a decomposable polarization. In order to make it more precise, one has to study the expression
$$\Sigma_0=(2i \pi)^{140} \cdot \frac{\widetilde{\Sigma}_{140}(\tau_a)}{\det(\Omega_2)^{140}} (\omega_1 \wedge \omega_2 \wedge \omega_3)^{\otimes 140} \in \bfS_{3,140}(\Z)$$
where $\widetilde{\Sigma}_{140} \in \bfS_{3,140}(\C)$ is the modular form defined
by the thirty-fifth elementary symmetric function of the eighth power of the
even Thetanullwerte.  According to Igusa \cite{igusa2}, for $\tau \in \H_3$, and the principal polarization $T=J_6$ on $A_{\tau}=\C^3/\tau \Z^3+\Z^3$ we have that
\begin{itemize}
\item $(A_{\tau},J_6)$ is a non hyperelliptic Jacobian if and only if $\widetilde{\chi}_{18}(\tau) \ne 0$;
\item $(A_{\tau},J_6)$ is a hyperelliptic Jacobian if and only if $\widetilde{\chi}_{18}(\tau) = 0$
and $\widetilde{\Sigma}_{140}(\tau) \neq 0$;
\item $(A_{\tau},J_6)$ is decomposable if and only if $\widetilde{\chi}_{18}(\tau) =
\widetilde{\Sigma}_{140}(\tau) = 0.$ 
\end{itemize}
We need a version of this theorem which is true over any field. First we have to make the  form $\Sigma_0$ primitive.
\begin{lemma} \label{primitive}
The form $$\Sigma_{140}=\frac{(2i \pi)^{140}}{2^{218}} \cdot \frac{\widetilde{\Sigma}_{140}(\tau_a)}{\det(\Omega_2)^{140}} (\omega_1 \wedge \omega_2 \wedge \omega_3)^{\otimes 140}$$  is  a primitive Siegel modular form of weight $140$ over $\Z$.
\end{lemma}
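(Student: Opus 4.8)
The plan is to reproduce, step by step, the argument that Ichikawa used for $\chi_h$ (recalled just before Prop.~\ref{chi18F}), establishing in turn that $\Sigma_{140}$ is modular, that it is defined over $\Z$, and that it is primitive. Set $y_{\epsm}:=\theta\Ch{\varepsilon_1}{\varepsilon_2}(0,\tau)^{8}$, the eighth power of an even theta constant, so that by construction $\widetilde{\Sigma}_{140}=e_{35}\bigl(\{y_{\epsm}\}\bigr)$ is the $35$th elementary symmetric function of the $36$ quantities $y_{\epsm}$. For modularity, note that $\Sp_6(\Z)$ permutes the even characteristics and that taking eighth powers kills the eighth root of unity in the theta transformation formula, so $y_{\epsm}(M\cdot\tau)=\det(\gamma\tau+\delta)^{4}\,y_{\sigma_M(\epsm)}(\tau)$ for a permutation $\sigma_M$ of the even characteristics. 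As $e_{35}$ is symmetric and homogeneous of degree $35$ in the $y_{\epsm}$, the automorphy factors multiply to $\det(\gamma\tau+\delta)^{4\cdot 35}=\det(\gamma\tau+\delta)^{140}$; thus $\widetilde{\Sigma}_{140}\in\bfR_{3,140}(\C)$ (this is Igusa's computation, \cite{igusa2}), and Prop.~\ref{Riemann} yields the geometric form $\Sigma_{140}\in\bfS_{3,140}(\C)$ written in the statement.

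For integrality, I would argue exactly as for $\chi_h$. The even theta constants have Fourier expansions in the Tate parameters with rational integral coefficients, hence so do the $y_{\epsm}$ and their symmetric function. The purpose of the normalising factor $(2i\pi)^{140}/2^{218}$ is, just as for $(2\pi)^{54}/2^{28}$ in \eqref{chi18}, to turn the Fourier coefficients of the associated geometric form into rational integers, the power $2^{218}$ being read off from (and dividing) the content of this integral $q$-expansion. Once this is known, \cite[Prop.~3.4]{Ichi3} applies without change: a geometric Siegel modular form whose analytic avatar has rational integral Fourier coefficients is defined over $\Z$, so $\Sigma_{140}\in\bfS_{3,140}(\Z)$.

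It remains to prove primitivity, which is the heart of the matter. By the $q$-expansion principle for $\Am_3$ (\cite{FC}) this amounts to showing that the content (the $\gcd$ of the Fourier coefficients) of $\Sigma_{140}$ is $1$, i.e.\ that for every prime $p$ some Fourier coefficient is a $p$-adic unit. For odd $p$ this is comparatively easy: the odd part of the content can be checked to be $1$ on the lowest order Fourier coefficients, and in any case $\Sigma_{140}$ is not identically zero by the non-vanishing of $\widetilde{\Sigma}_{140}$ recalled before the statement. The genuine difficulty, and the step I expect to be the main obstacle, is the prime $p=2$: one must show that $2^{218}$ is the \emph{exact} power of $2$ dividing the un-normalised integral $q$-expansion, so that after division the $2$-content becomes trivial. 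This forces a careful analysis of the leading terms of the $q$-expansions of the $y_{\epsm}$, tracking the cancellations among the $2^{w}$ lattice vectors of minimal norm (with $w$ the weight of $\varepsilon_1$) imposed by the signs coming from $\varepsilon_2$, and then minimising the resulting $2$-adic valuation over the $36$ monomials of $e_{35}$. This sign-and-valuation bookkeeping over all $36$ even characteristics is finite but intricate, and is the only laborious part of the proof.
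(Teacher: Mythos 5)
You have the right skeleton (modularity via Igusa, integrality via integral Fourier coefficients and Ichikawa's Prop.~3.4), but the heart of the lemma --- determining the \emph{exact} content --- is precisely the step you defer to ``finite but intricate bookkeeping,'' and that is a genuine gap rather than a detail. The paper does not, and apparently cannot, do this bookkeeping by hand: even computing the single first Fourier coefficient of $2^{-216}\widetilde{\Sigma}_{140}$ was ``too large to be done over $\Z$'' and had to be carried out as a modular computation in MAGMA. The sign-and-valuation analysis over all $36$ monomials that you propose is exactly the computation the paper's proof is designed to avoid. Moreover, your dismissal of odd primes as ``comparatively easy'' is wrong on its face: the prime $3$ turns out to divide (to the third power) every test value available, so it is a serious candidate divisor of the form, and ruling it out required the same machine computation as the prime $2$.

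What the paper actually does, and what your proposal is missing, are two ingredients. First, a structural counting argument fixes the a priori power of $2$: among the $36$ even characteristics exactly $8$ have $\varepsilon_1=0$, with $\theta=1+2\cdots$, while for $\varepsilon_1\neq 0$ the constant $\tfrac12\theta$ has integral coefficients; since each $35$-fold monomial of $e_{35}$ contains at least $35-8=27$ characteristics with $\varepsilon_1\neq 0$, every monomial is divisible by $2^{8\cdot 27}=2^{216}$. This is where the normalizing power comes from; it is not ``read off from the content.'' Second --- and this is the key trick --- the remaining content is bounded \emph{arithmetically} using the paper's own CM computations: the values of $2^{-216}\Sigma_0$ at the three points $(E(d)^3,a_0M)$ for $d=7,19,43$ are algebraic integers that the content must divide, namely $2^{2}\cdot 3^3\cdot 5\cdot 7^{105}\cdot 13\cdot 67$, $\;2^{92}\cdot 3^3\cdot 19^{105}\cdot 29\cdot 31$, and $-2^{94}\cdot 3^3\cdot 5\cdot 43^{105}\cdots$, whose gcd is $2^2\cdot 3^3$. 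Only after this reduction does a single MAGMA computation of the first Fourier coefficient (modulo $8$ and modulo $3$) finish the proof: that coefficient is divisible by $4$ but by neither $8$ nor $3$, and combined with the expansion $\theta\Ch{0}{\varepsilon_2}(0,\tau)=1+2\cdots$ one gets that the content is exactly $2^2$, whence the normalization $2^{216+2}=2^{218}$. Without the gcd argument from the CM values, your plan gives no finite route to excluding odd primes, and without the counting argument it does not even justify the shape of the constant $2^{218}$.
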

\begin{proof}
To prove this, we study the first Fourier coefficient of $\widetilde{\Sigma}_{140}$ as Ichikawa did in  \cite{Ichi3}
for $\widetilde{\chi}_{18}$. Let $[\epsm]= \Ch{\varepsilon_{1}}{\varepsilon_{2}} $ be an  even characteristic. If
 $\varepsilon_1 = 0$  then $\theta [\epsm](0, \tau) = 1+ 2 \cdots$. If $\varepsilon_1 \ne  0$, 
$\frac{1}{2} \theta [\epsm](0, \tau)$ has integer Fourier coefficients.
Since for each $[\epsm_0]$ even 
$$\prod_{[\epsm] \ne [\epsm_0], \,  \textrm{even}} \theta[\epsm](0, \tau)^8$$
has a maximum of $8$ Thetanullewerte such that $\varepsilon_1=0$, $\widetilde{\Sigma}_{140}(\tau)$ divided by $c=2^{8 \cdot (35-8)}=216$ has still integer Fourier coefficients. On the other hand computing $c^{-1} \Sigma_0$ in the cases $d=7$, $d=19$ and $d=43$, \# 1 we find respectively
$$\begin{array}{l}
  2^{2} \cdot 3^3 \cdot 5 \cdot 7^{105} \cdot 13 \cdot 67,   \\ 
 2^{92} \cdot 3^3 \cdot 19^{105} \cdot 29 \cdot 31,   \\
 -2^{94} \cdot 3^3 \cdot 5 \cdot 43^{105} \cdot 827 \cdot 888001 \cdot 2458861813949 \cdot   \\ 96551756361358517199893386077757285219636855141244663. \end{array}$$
So only $2$ and $3$ can still divide the form $c^{-1} \Sigma_0$.  In order to see if this is the case, we compute the first Fourier coefficient of $c^{-1} \widetilde{\Sigma}_{140}$ using MAGMA. Unfortunately this computation is too large to be done over $\Z$ so we do a modular computation. It appears that $4$ divides the first Fourier coefficient but not $8$ or $3$. Since $\theta  \Ch{0}{\varepsilon_{2}}(0, \tau) = 1+ 2 \cdots$ one can also see that $4$ divides the form $c^{-1} \Sigma_0$ and that the form $\Sigma_0$ divided by $2^{216+2}$ is now primitive.
\end{proof}

Hence we have two primitive Siegel modular forms defined over $\Z$ :
\begin{equation*} 
\chi_{18}((A,a))=\frac{(2\pi)^{54}}{2^{28}} \cdot  \frac{\widetilde{\chi}_{18}(\tau_a)}{\det(\Omega_2)^{18}} (\omega_1 \wedge \omega_2 \wedge \omega_3)^{\otimes 18} \in \bfS_{3,18}(\Z)
\end{equation*}
and
\begin{equation*} 
\Sigma_{140}((A,a))=\frac{(2\pi)^{140}}{2^{208}} \cdot  \frac{\widetilde{\Sigma}_{140}(\tau_a)}{\det(\Omega_2)^{140}} (\omega_1 \wedge \omega_2 \wedge \omega_3)^{\otimes 140} \in \bfS_{3,140}(\Z).
\end{equation*}

Let $(A,a)$ be a principally polarized abelian variety defined over a field $k$.  In the proof of Prop.\ref{chi18F} we have already seen   that   $(A,a)$ is a non hyperelliptic Jacobian if and only if $\chi_{18}((A,a)) \ne 0$ and that if $(A,a)$ is a hyperelliptic Jacobian or decomposable then $\chi_{18}((A,a)) = 0$.   Now, since $\widetilde{\Sigma}_{140}$ is zero on $(\Am_3 \setminus t(\Mm_3)) \otimes \C$, it is zero on the schematic closure. So if $(A,a)$ is decomposable then $\Sigma_{140}(A,a)=0$. This proves
\begin{itemize}
\item if $\Sigma_{140}((A,a)) \ne 0$ and $\chi_{18}((A,a))=0$ then $(A,a)$ is the Jacobian of a hyperelliptic curve;
\item  if $(A,a)$ is decomposable then 
$\chi_{18}((A,a)) =\Sigma_{140}((A,a)) = 0.$ 
\end{itemize}
To complete the equivalence relations, we need only to see that if $\chi_{18}((A,a)) =\Sigma_{140}((A,a)) = 0$ then $(A,a)$ is decomposable. Since $\chi_{18}$ and $\Sigma_{140}$ are primitive, the corresponding divisors have no `vertical component' which is a necessary condition for the equivalence relations to hold. However it is not completely clear to us how these divisors intersect and we only  conjecture that they do it in a `nice' way.
\begin{conjecture} \label{conj}
If $\chi_{18}((A,a)) =\Sigma_{140}((A,a)) = 0$ then $(A,a)$ is decomposable.
\end{conjecture}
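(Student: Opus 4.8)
The plan is to reduce Conjecture \ref{conj} to a single nonvanishing statement about hyperelliptic Jacobians in positive characteristic, and then to attack that statement through the combinatorics of vanishing theta-nulls. Since the vanishing of the primitive integral forms $\chi_{18}$ and $\Sigma_{140}$ is insensitive to base change, I would first pass to $\ov{k}$. By Th.\ref{hu}, a principally polarized abelian threefold over $\ov{k}$ is either decomposable or the Jacobian $(\Jac(C),j)$ of a smooth genus $3$ curve $C$. If $(A,a)$ is \emph{not} decomposable, both remaining cases must be ruled out: the non-hyperelliptic case is already excluded, since we saw in the proof of Prop.\ref{chi18F} that $\chi_{18}((A,a))\neq 0$ there, contradicting the hypothesis $\chi_{18}=0$; and the hyperelliptic case will be excluded precisely by showing that $\Sigma_{140}$ does not vanish at a hyperelliptic Jacobian. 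Thus Conjecture \ref{conj} is equivalent to the assertion that $\Sigma_{140}((\Jac(C),j))\neq 0$ for every smooth hyperelliptic genus $3$ curve $C$ over a field of characteristic $\neq 2$.

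To prove this nonvanishing, recall that $\widetilde{\Sigma}_{140}$ is the thirty-fifth elementary symmetric function $e_{35}$ of the eighth powers $x_{[\epsm]}=\theta[\epsm](0,\tau)^{8}$ of the $36$ even Thetanullwerte, while $\widetilde{\chi}_{18}$ is a constant times their product. Writing $e_{35}=\sum_{[\epsm_0]}\prod_{[\epsm]\neq[\epsm_0]} x_{[\epsm]}$, one sees that if \emph{exactly one} even Thetanullwert vanishes then a single summand survives and $e_{35}$ equals the product of the remaining thirty-five nonzero values, hence $e_{35}\neq 0$. Over $\C$ this is Igusa's trichotomy; the content of the conjecture is therefore the \emph{characteristic-$p$} statement that at a smooth hyperelliptic genus $3$ curve exactly one even theta-null vanishes, equivalently that no such curve acquires two or more vanishing even theta-nulls in characteristic $p\neq 2$.

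I would establish this last point through the theory of theta characteristics of hyperelliptic curves, which is available over any field of characteristic $\neq 2$ (Mumford). For a smooth hyperelliptic curve the even theta characteristics $L$ with $h^{0}(L)>0$ are determined combinatorially by the subsets of the $2g+2=8$ branch points, and for $g=3$ exactly one such even characteristic is effective; this is also what makes the hyperelliptic locus in $\Am_3$ a divisor cut out generically by a single vanishing theta-null. Since the branch points of $C$ remain distinct, this count is the same as over $\C$, so precisely one even theta-null vanishes and $\widetilde{\Sigma}_{140}\neq 0$ at $C$. It then remains to transport this nonvanishing to the integral modular form: using the primitive integral model of Lem.\ref{primitive} together with the fact that, over $\Z[1/2]$, $\Sigma_{140}$ is (a unit times) the $e_{35}$ of the algebraic even theta-nulls, the value $\Sigma_{140}((\Jac(C),j))$ becomes the product of the thirty-five nonvanishing algebraic theta-nulls and is therefore nonzero.

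The main obstacle is exactly this last transition, that is, controlling the vanishing pattern of the even theta-nulls \emph{algebraically} and in positive characteristic. Concretely one must (i) identify $\Sigma_{140}$ with the $e_{35}$ of the algebraic theta-nulls on a moduli stack carrying a suitable theta/level structure over $\Z[1/2]$, descend back to $\Am_3$, and check that no normalizing factor is lost modulo $p$; and (ii) rule out that some smooth hyperelliptic curve in characteristic $p$ carries an \emph{extra} vanishing even theta-null absent in characteristic $0$ — this is precisely the possibility that the horizontal divisor $\div(\Sigma_{140})$ meets a hyperelliptic Jacobian in a special fibre, the intersection phenomenon flagged before the statement. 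An alternative, and perhaps cleaner, route would be to imitate \eqref{chimu}: to produce a Teichm\"uller modular form on $\Mm_3$ whose square (or suitable power) is $t^{*}\Sigma_{140}$ and which visibly restricts to a nonzero form on the hyperelliptic locus, thereby bypassing the characteristic-$p$ bookkeeping of individual theta-nulls entirely.
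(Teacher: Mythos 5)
There is no proof in the paper to compare against: the statement is stated as Conjecture~\ref{conj} precisely because the author could not close the gap you run into. Everything in your first paragraph is already established in the paper's discussion immediately preceding the conjecture: decomposable implies $\chi_{18}=\Sigma_{140}=0$; $\chi_{18}\neq 0$ characterizes non-hyperelliptic Jacobians; and $\Sigma_{140}\neq 0$ together with $\chi_{18}=0$ forces a hyperelliptic Jacobian. The single missing implication --- equivalent, as you correctly observe via Th.~\ref{hu} and Prop.~\ref{chi18F}, to the nonvanishing of $\Sigma_{140}$ at every smooth hyperelliptic Jacobian in characteristic $p\neq 2$ --- is exactly what the author leaves open, remarking only that primitivity of the two forms excludes vertical components of their divisors and that one can merely conjecture that these divisors meet the special fibres in a ``nice'' way. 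Your reduction is therefore correct and is the natural reformulation of the conjecture, but it is not new relative to the paper.

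The genuine gap is that your proposal defers, rather than resolves, the two steps you label (i) and (ii), and these are not bookkeeping: they are the whole content of the conjecture. The integral form $\Sigma_{140}\in\bfS_{3,140}(\Z)$ exists only via the $q$-expansion principle applied to the analytic form, so its value at a characteristic-$p$ point is not, without further argument, computed as $e_{35}$ of any algebraically defined theta constants; one must produce the individual $\theta[\epsm]^8$ as sections over a level-$2$ moduli stack over $\Z[1/2]$, descend the symmetric function to $\Am_3$, and verify that the normalization of Lem.~\ref{primitive} loses nothing mod $p$. One must then invoke a characteristic-$p$ Riemann vanishing criterion identifying the vanishing of each such algebraic theta constant at $\Jac(C)$ with the effectivity of the corresponding even theta characteristic. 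Your purely algebraic count --- a smooth hyperelliptic genus-$3$ curve in characteristic $\neq 2$ has exactly one effective even theta characteristic, namely the $g^1_2$ (since an effective even characteristic has $h^0\geq 2$ and hence is a $g^1_2$, which is unique) --- is correct in all characteristics $\neq 2$, but without (i) and (ii) it says nothing about the value of $\Sigma_{140}$. As written, the argument amounts to: ``the conjecture holds provided the vanishing pattern of the theta-nulls is the same in characteristic $p$ as over $\C$,'' and that proviso is a restatement of the author's worry about how $\div(\Sigma_{140})$ meets the fibre over $p$, i.e.\ of the conjecture itself. So the proposal is a reasonable plan of attack, but it does not constitute a proof, and the paper offers none.
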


\begin{corollary}
Let $(A,a)$ be a principally polarized abelian variety defined over a field $k$.  Under conjecture \ref{conj}, we have that 
\begin{itemize}
\item $(A,a)$ is a non hyperelliptic Jacobian if and only if $\chi_{18}((A,a)) \ne 0$;
\item $(A,a)$ is a hyperelliptic Jacobian if and only if $\chi_{18}((A,a)) = 0$
and $\Sigma_{140}((A,a)) \neq 0$;
\item $(A,a)$ is decomposable if and only if $$\chi_{18}((A,a)) =
\Sigma_{140}((A,a)) = 0.$$ 
\end{itemize}
\end{corollary}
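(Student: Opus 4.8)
The plan is to read the corollary as three biconditionals and to assemble them from the implications already proved in the text, using Conjecture \ref{conj} to supply the single missing arrow. Throughout, the three properties on the left are understood over $\ov{k}$, which is consistent since the vanishing of the geometric Siegel forms $\chi_{18}$ and $\Sigma_{140}$ is insensitive both to quadratic twists and to field extension.

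First I would record the underlying trichotomy, which guarantees that the three cases are exhaustive and pairwise disjoint. Over $\ov{k}$ any principally polarized abelian threefold $(A,a)$ is either decomposable or, by Theorem \ref{hu}, the Jacobian of a smooth genus $3$ curve; in the latter case the curve is either hyperelliptic or a smooth plane quartic. Since the canonical polarization of the Jacobian of a smooth connected curve is indecomposable, a (hyperelliptic or non-hyperelliptic) Jacobian is never decomposable, and a curve is never both hyperelliptic and non-hyperelliptic. This is the only geometric input beyond the modular-form computations.

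Next I would invoke the facts established immediately before the statement: (a) $(A,a)$ is a non-hyperelliptic Jacobian if and only if $\chi_{18}((A,a))\ne 0$; (b) if $(A,a)$ is a hyperelliptic Jacobian or decomposable then $\chi_{18}((A,a))=0$; (c) if $\Sigma_{140}((A,a))\ne 0$ and $\chi_{18}((A,a))=0$ then $(A,a)$ is a hyperelliptic Jacobian; and (d) if $(A,a)$ is decomposable then $\chi_{18}((A,a))=\Sigma_{140}((A,a))=0$. Conjecture \ref{conj} is exactly the converse of (d). The three equivalences then drop out. The first is (a) verbatim. For the second, the backward direction is (c), while in the forward direction a hyperelliptic Jacobian has $\chi_{18}=0$ by (b), and it must satisfy $\Sigma_{140}\ne 0$ since otherwise $\chi_{18}=\Sigma_{140}=0$ would force decomposability by Conjecture \ref{conj}, contradicting the indecomposability of a Jacobian recorded above. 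For the third, the forward direction is (d) and the backward direction is Conjecture \ref{conj}.

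Under Conjecture \ref{conj} there is no real obstacle: the argument is pure bookkeeping, and all the mathematical content sits in the conjecture. The genuine difficulty, which we do not resolve, is precisely the converse of (d): since $\chi_{18}$ and $\Sigma_{140}$ are primitive their zero divisors carry no vertical component, but it remains to understand how these two divisors meet along the hyperelliptic locus, and ruling out any spurious intersection there is exactly what Conjecture \ref{conj} postulates.
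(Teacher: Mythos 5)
Your proposal is correct and follows essentially the same route as the paper, which leaves the corollary without an explicit proof precisely because it is the bookkeeping assembly you describe: the implications (a)--(d) established just before the statement, Conjecture \ref{conj} as the one missing converse, and the trichotomy from Theorem \ref{hu} to rule out overlaps. Your explicit justification that a hyperelliptic Jacobian must have $\Sigma_{140}\neq 0$ (via the conjecture and indecomposability) is exactly the step the paper tacitly relies on.
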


\begin{example}
Let $\Sigma=\Sigma_{140}((A,a_0 M),\omega_0)$ defined in the same way as $\chi$. We use the data described in the proof of Lem.\ref{primitive}. In the case $d=43$, \#1  we see that the prime $47$ divides $\chi$ but not $\Sigma$. For such a prime, we see that the non hyperelliptic Jacobian $(A,a)$ reduces modulo $47$ to the Jacobian of a genus $3$ hyperelliptic curve. Consider now the case $d=7$ for which we know that $(A,a)$ is the Jacobian of the Klein quartic $X_{7,1}$ by Rem.\ref{kleinquartic}. The prime $7$ divides both $\chi$ and $\Sigma$. Thus, $X_{7,1}$ has bad reduction at $7$. However, as explained in \cite{elkies}, the Klein quartic has potentially good reduction at $7$ and its reduction is the hyperelliptic curve $H_{7,1} : y^2=x^7-x$. Indeed, over a number field which contains $(-7)^{1/4}$, the elliptic curve $E(7)$ can be transformed into the elliptic curve $E' : y_1^2=4 x_1^2-3/4 \cdot \sqrt{-7} x_1^2-x_1$, which has good reduction at $7$, by $$x_1=\frac{x-2}{\sqrt{-7}}, \quad y_1=\frac{2y+x}{2 (-7)^{3/4}}.$$
We get that 
$$\omega_E= \frac{dx}{2y+x}=(-7)^{1/4} \frac{dx_1}{2y_1}=(-7)^{1/4} \omega_{E'}.$$
Thus the value of $\Sigma$ is changed by $((-7)^{1/4})^{-140 \cdot 3}=(-7)^{-105}$ which canceled the factor $7^{105}$ in $\Sigma$ whereas $\chi$ becomes $7^{14} \cdot ((-7)^{1/4})^{-18 \cdot 3}=(-7)^{1/2}$. The computations agree with the theory.  A similar computation can be done for the curve $X_{19,1}$. For $p=19$, the curve  $X_{19,1}$has potentially good reduction and this reduction is the hyperelliptic curve $H_{19,1} : y^2=x^7-x$. For $p=2$, from the valuations of $\chi$ and $\Sigma$ and under Conj.\ref{conj} we can deduce that $X_{19,1}$ has not, even potentially, good reduction.
\end{example}

\subsection{Application to optimal genus $3$ curves}
Let $p=47$. We want to prove that there is a genus $3$ curve $C$ over $F=\GF_{47}$ whose number of rational points reaches Serre-Weil bound, i.e. $\# C(F)=p+1+3 \lfloor 2\sqrt{p} \rfloor=87$.
According to \cite{lauter}, if such a curve exists,  its Jacobian is isogenous to the third power of an elliptic curve $E/F$ with trace $- \lfloor 2\sqrt{p} \rfloor=-13$. This means that the elliptic curve $E$ is ordinary and thus such a curve exists. Moreover  if $\pi$ denotes the Frobenius endomorphism of $E$ then $\Z[\pi] \simeq \Z[(13+\sqrt{13^2-4\cdot 47})/2]=\Z[\tau]$ where $\tau=(1+\sqrt{-19})/2$. So, $\End(E)=\Z[\pi]$ is the ring of integers $\calO_K$ of $K=\Q(\sqrt{-19})$. Note that, since $\left(\frac{-19}{47}\right)=1$, all endomorphisms are defined over $F$. Moreover since the class number of $\calO_K$ is $1$, $E$ is unique up to isomorphisms.\\
By Lem.\ref{onecurve} we know that $\Jac(C)$ is actually isomorphic to $E^3$. By Lem.\ref{inde=inde} the existence of an indecomposable principal polarization on $\Jac(C)$   translates into the existence of an indecomposable positive definite hermitian form $M \in \sM_3(\calO_K)$ of determinant $1$.  In \cite{schiemann}, such forms have been classified up to isomorphisms for some imaginary quadratic orders. In the present case, there exists only one, given by
$$M=\left(\begin{array}{ccc}
2 & 1 &-1 \\ 1 & 3 & -2+\tau\\ -1 & -2+\ov{\tau}&3
 \end{array}\right).$$
 So,  if $C$ exists then $\Jac(C) \simeq (E^3,a)$ with $a=a_0 M$.\\
 
Let us consider the quadratic twist $E'$ of $E$ and  $(A',a')=(E'^3,a_0 M)$ the quadratic twist of $(E^3,a)$. Let $E(19)$ be the model of the elliptic curve with CM by $\sqrt{-19}$ given in Table \ref{grossmodel}. Using Lem.\ref{nbpoints} we can see that $\textrm{Tr}(E(19) \otimes F)=13$, i.e. $E(19) \otimes F=E'$. Since $\End(E(19) \otimes \Q)=\calO_K$,  we  consider the principally polarized abelian variety $(\tilde{A},\tilde{a})=(E(19)^3,a_0 M)$ which is a lift of $(A',a')$. Table \ref{others} shows that
$$\chi_{18}((\tilde{A},\tilde{a}) \otimes \Q,\omega_0 \otimes \Q)=(2^5 \cdot 19^7)^2 \cdot (-2).$$ 
It is not a square over $\Q$ and since $\left(\frac{-2}{47}\right)=-1$ it is a  non-square over $F$. Hence Prop.\ref{mainresult} shows that $(A',a')$ is not a Jacobian so by Th.\ref{twist} its quadratic twist $(A,a)$ is. In conclusion, we obtained that there exists an optimal curve over $\GF_{47}$ but no minimal curve (i.e. whose number of rational points is equal to $p+1-3 \lfloor 2\sqrt{p} \rfloor=9$).\\

In the same way, we can prove that there is an optimal but no minimal genus $3$ curve over $\GF_p$ for 
$p=61,137,277$ and a minimal but no optimal genus $3$ curve  for $p=311$. Note that a model for each of these curves can be obtained by reducing modulo $p$ the model of $X_{19,1}$ given in Sec.\ref{tablesec}.

Also, using $d=67$ and respectively the form $\# 3$ and $\# 7$ we can prove that there is a minimal and an optimal curve over $\GF_{23^3}$.

\begin{corollary} \label{someoptimal}
We have $N_q(3)=q+1+3 \lfloor 2 \sqrt{q} \rfloor$ for $q=47,61,137,277$ and $23^3$. 
\end{corollary}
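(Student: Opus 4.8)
The plan is to combine the Serre--Weil upper bound with an explicit construction, the latter resting entirely on Prop.\ref{mainresult} together with Lems.\ref{onecurve}, \ref{inde=inde}, \ref{nbpoints} and the classification of hermitian forms in \cite{schiemann}. Since $N_q(3)\le q+1+3m$ with $m=\lfloor 2\sqrt q\rfloor$ is exactly the Serre--Weil bound, for each listed $q$ it suffices to construct one genus $3$ curve $C/\GF_q$ with exactly $q+1+3m$ points. I would build it from an ordinary elliptic curve $E/\GF_q$ with $\tr(\pi_E)=-m$: if $(E^3,a_0M)$ is isomorphic over $\GF_q$ to a Jacobian $\Jac(C)$ for a suitable indecomposable principal polarization, then $\#C(\GF_q)=q+1-\tr(\pi_{E^3})=q+1+3m$, which realizes the bound.

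First, for each $q$ I would identify the CM field. Computing $m^2-4q$ shows it equals $-d$ times a perfect square with $d\in\{19,67\}$ among the listed cases, so $\pi_E\in K=\Q(\sqrt{-d})$; one checks $p\nmid m$, so $E$ is ordinary, and the splitting condition $\big(\tfrac{-d}{p}\big)=1$ guarantees that $E$ has all its endomorphisms defined over $\GF_q$. I would fix $E$ to be the (unique, since $\calO_K$ has class number $1$) curve in the isogeny class with \emph{maximal} endomorphism ring $\End(E)=\calO_K$; this is the point requiring care for $q=23^3$, where $\Z[\pi]\subsetneq\calO_K$. Then Lem.\ref{onecurve} upgrades $\Jac(C)\sim E^3$ to an isomorphism $\Jac(C)\cong E^3$, and Lem.\ref{inde=inde} (together with Th.\ref{hu}) shows that the existence of the desired curve is equivalent to the existence of an indecomposable positive definite hermitian $M\in\sM_3(\calO_K)$ of determinant $1$, which I read off from the tables of \cite{schiemann}.

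The decisive step is to decide which of the two quadratic twists $(E^3,a_0M)$ (trace $-m$, optimal) and $(E'^3,a_0M)$ (trace $+m$, minimal) is actually a Jacobian over $\GF_q$; by Th.\ref{twist} exactly one is. To apply Prop.\ref{mainresult} I would lift to the Gross model $E(d)$ over $K$ from Sec.\ref{link}: Lem.\ref{nbpoints} computes $\tr(E(d)\otimes\GF_q)$ and hence tells me whether $E(d)$ reduces to $E$ or to $E'$, while Table \ref{others}/\ref{others2} supplies the value $\chi=\chi_{18}((E(d)^3,a_0M),\omega_0)\in\calO_K$. Writing $\chi$ as a square times its non-square part $\delta$, a single Legendre-symbol computation $\big(\tfrac{\delta}{p}\big)$ decides, via Prop.\ref{mainresult}, whether the reduction of $(E(d)^3,a_0M)$ is a Jacobian, and Th.\ref{twist} then transfers the conclusion to the other twist. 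In each of $q=47,61,137,277$ (all with $d=19$) the computation selects the optimal twist, yielding a smooth plane quartic with $q+1+3m$ points; for $q=23^3$ (with $d=67$) I would use the form $\#7$ of \cite{schiemann} to obtain the optimal curve (the form $\#3$ produces the minimal one). Combined with the Serre--Weil bound this gives $N_q(3)=q+1+3m$ in all five cases.

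The main obstacle is the determination of the value $\chi_{18}((E(d)^3,a_0M),\omega_0)$ itself: it relies on the high-precision analytic evaluation of the thirty-six even Thetanullwerte at the Riemann matrix $\tau_a$ constructed in Sec.\ref{link}, followed by recognizing the resulting complex number as an element of $\calO_K$. Once this value and its non-square part are in hand, the remaining quadratic-character and trace bookkeeping is routine; the only genuinely delicate structural point is the composite modulus $q=23^3$, where one must ensure that the chosen $E$ has maximal endomorphism ring (so that Lems.\ref{onecurve} and \ref{inde=inde} apply) despite $\Z[\pi]$ being the non-maximal order of conductor $2$.
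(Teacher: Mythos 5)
Your proposal is correct and takes essentially the same approach as the paper: Serre--Weil upper bound, construction of $(E^3,a_0M)$ with $M$ taken from Schiemann's tables \cite{schiemann}, lift to the Gross model $E(d)$, and the $\chi_{18}$ square test of Prop.~\ref{mainresult} combined with Th.~\ref{twist} to decide which quadratic twist is the Jacobian, including the same assignment of the forms \#7 (optimal) and \#3 (minimal) for $q=23^3$. One small caveat: for $q=23^3$, taking $\End(E)=\calO_K$ is indeed the right move (it is what makes Prop.~\ref{pp} and the lift to $E(67)^3$ legitimate), but it does not literally restore the hypotheses of Lems.~\ref{onecurve} and \ref{inde=inde}, whose standing assumption in Serre's appendix is that $\Z[\pi]$ itself is the maximal order --- false here, since $\Z[\pi]=\Z[\sqrt{-67}]$ has conductor $2$; this is harmless for the corollary, because the existence direction never needs those two lemmas: once $\chi\not\equiv 0\pmod{\mathfrak{p}}$, the proof of Prop.~\ref{chi18F} already shows that $(E^3,a_0M)$ is geometrically a non-hyperelliptic Jacobian, so Th.~\ref{twist} together with the Legendre-symbol computation produces the optimal curve, which is exactly how the paper's argument concludes (the paper itself is silent on this subtlety).
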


\begin{remark}
Some of these values have been confirmed by explicit computations by Top \cite{top} and  Alekseenko et al. \cite{zait}.
\end{remark}

 \begin{landscape} \begin{table} \caption{Computation of $\chi$} \label{others}   \begin{tabular}{|c|c|c|c|}
\hline
$d$ & $M$ & 
$\chi:=\chi_{18}((A,a_0 M),\omega_0)$ & $\# \Aut(A,a_0 M)$ \\
\hline
$7$, \# 1 &  $\left(\begin{array}{ccc} 2 & 1 & 1 \\ 1 & 2 & \ov{\tau} \\ 1 & \tau & 2 \end{array}\right)$ &
 $(7^7)^2$ 
& $2 \cdot 168$ \\
 \hline
 $19$  \# 1 & $\left(\begin{array}{ccc}
2 & 1 &-1 \\ 1 & 3 & -2+\tau\\ -1 & -2+\ov{\tau}&3
 \end{array}\right)$ & 
 $(2^5 \cdot 19^7)^2 \cdot (-2)$ 
& $2 \cdot 6$ \\
 \hline
 $43$,  \# 1 &  $\left(\begin{array}{ccc} 3 & 1 & 1-\ov{\tau} \\ 1 & 4 & 2 \\ 1-\tau & 2 & 5 \end{array}\right)$ &
 $(2^6 \cdot 43^7)^2 \cdot (-47 \cdot 79 \cdot 107 \cdot 173)$ 
 & $2 \cdot 1$ \\
 \hline
 $43$,  \# 2&  $\left(\begin{array}{ccc} 3 & 1+\ov{\tau} & 2-\ov{\tau} \\ 1+\tau & 5 & 2-\ov{\tau} \\ 2-\tau & -2-\tau & 5 \end{array}\right)$ &
 $(2^5 \cdot 3^4 \cdot 43^7)^2 \cdot (-2 \cdot 3 \cdot 7)$ & $2 \cdot 6$ \\
 \hline
  $43$, \# 3 &  $\left(\begin{array}{ccc} 2 & -1 & 1 \\  -1 & 4 & 1+\ov{\tau} \\ 1 & 1-\tau & 4 \end{array}\right)$ &
 $(2^6 \cdot 5^3 \cdot 43^7)^2 \cdot (-487)$ & $2 \cdot 2$ \\
 \hline
  $43$,  \# 4,5 &  $\left(\begin{array}{ccc} 3 & 1 & -1-\ov{\tau} \\  1 & 3 & -1 \\ -1-\tau & -1 & 5 \end{array}\right)$ &
 \begin{tabular}{c}
 $-2^{11} \cdot 3^9 \cdot [1,2]^{27}  \cdot [5,-2]  \cdot [7,-2] \cdot [17,-4]= $ \\
 $-2^{11} \cdot 3^9 \cdot 43^{13} \cdot \left(43 \cdot 59 \cdot 79 \cdot 397\right)^{1/2}$ \end{tabular}
  & $2 \cdot 2$ \\
 \hline
  $67$,  \# 1,2 &  $\left(\begin{array}{ccc} 5 & -1-\ov{\tau} & -\ov{\tau} \\  -1-\tau & 5 & 2  \\ -\tau & 2 & 6 \end{array}\right)$ &
\begin{tabular}{c} $2^{11}  \cdot [-1,2]^{28} 
 \cdot  [1,2] \cdot  [-11,2]  \cdot  [-15,2] \cdot [3,4] \cdot$ \\ $[1,6 ] \cdot [23,2]\cdot [21,4]  \cdot [-49,2]   \cdot [43,6] \cdot [55,6] \cdot [53,16]= $ \\
 $2^{11} \cdot 67^{14} \cdot (71 \cdot 167 \cdot 263 \cdot 293 \cdot $\\ $619 \cdot 643 \cdot 797 \cdot 2371 \cdot 2719 \cdot 3967 \cdot 8009)^{1/2}$
 \end{tabular} & $2 \cdot 1$ \\
 \hline
 $67$,  \# 3  &  $\left(\begin{array}{ccc} 5 & -2+\ov{\tau} & -1-\ov{\tau} \\  -2+\tau & 6 & -2  \\ -1-\tau & -2 & 7 \end{array}\right)$ & $(2^{6} \cdot 3^{6} \cdot 67^7)^2 (-13 \cdot 53 \cdot 71 \cdot 131 \cdot 3319)$ & $2 \cdot 1$ \\ \hline 
 $67$,  \# 4,5 &  $\left(\begin{array}{ccc} 3 & -1 & 1 \\  -1 & 4 & -\ov{\tau}  \\ 1 & -\tau & 5 \end{array}\right)$ &
\begin{tabular}{c}  $2^{12} \cdot 3^9 \cdot [-1,2]^{27} \cdot [-5,2] \cdot [-7,2] \cdot [-3,4] \cdot$ \\  $[1,4] \cdot [-13,4] \cdot [-15,8] \cdot [-33,8] \cdot [23,10] = $ \\
$2^{12} \cdot 3^9 \cdot 67^{13} \cdot (67 \cdot 83 \cdot 103 \cdot 269 \cdot$ \\ $277 \cdot 389 \cdot 1193 \cdot 1913 \cdot 2459)^{1/2}$ \end{tabular} & $2 \cdot 1$ \\
 \hline
\end{tabular}\end{table}\end{landscape}

 \begin{landscape} \begin{table} \caption{Computation of $\chi$} \label{others2}   \begin{tabular}{|c|c|c|c|}
\hline
$d$ & $M$ & 
$\chi:=\chi_{18}((A,a_0 M),\omega_0)$ & $\# \Aut(A,a_0 M)$ \\
\hline
$67$,  \# 6 &  $\left(\begin{array}{ccc} 5 & -1+\ov{\tau} & \ov{\tau} \\  -1+\tau & 5 & 2  \\ \tau & 2 & 5 \end{array}\right)$ & $(2^6 \cdot 5^3 \cdot 67^7)^2 \cdot 83 \cdot 211 \cdot 1637 \cdot 2441$ & $2 \cdot 1$ \\
\hline
$67$,  \# 7 &  $\left(\begin{array}{ccc} 2 & 0 & -1 \\  0 & 3 & -2+\ov{\tau}  \\ -1 & -2+\tau & 7 \end{array}\right)$ &
$(2^5 \cdot 7^4 \cdot 67^{7})^2 \cdot (-2 \cdot 7 \cdot 31)$ & $2 \cdot 6$ \\
\hline 
$67$,  \# 8,9 &  $\left(\begin{array}{ccc} 3 & -1 & -2+\ov{\tau} \\  -1 & 4 & 0  \\ -2+\tau & 0 & 7 \end{array}\right)$ & \begin{tabular}{c}
$2^{11} \cdot 7^6 \cdot [-1,2]^{27} \cdot [1,2]^2 \cdot [-37,6] \cdot [-71,12] \cdot [-71,30] = $ \\ $2^{11} \cdot 7^6 \cdot 67^{13} \cdot (67 \cdot 71^2 \cdot 1759 \cdot 6637 \cdot 18211)^{1/2}$ \end{tabular} & $2 \cdot 2$ \\ \hline 
$67$,  \# 10,12 &  $\left(\begin{array}{ccc} 2 & -1 & 0 \\  -1 & 4 & -1+\ov{\tau}  \\ 0 & -1+\tau  & 5 \end{array}\right)$ & \begin{tabular}{c}
$-2^{11} \cdot 3^9 \cdot 5^6 \cdot [-1,2]^{27} \cdot [-3,2] \cdot [3,2] \cdot [-21,2] \cdot [-31,2]=$\\
$-2^{11} \cdot 3^9 \cdot 5^6 \cdot 67^{13} \cdot (67 \cdot 71 \cdot 83 \cdot 467 \cdot 967)^{1/2}$ \end{tabular} & $2 \cdot 2$ \\
\hline
$67$,  \# 11 &  $\left(\begin{array}{ccc} 5 & \ov{\tau} & -2 \\  \tau & 6 & 2+\ov{\tau}  \\ -2 & 2+\tau  & 6 \end{array}\right)$ & $( 2^6 \cdot 3^4 \cdot  5^3 \cdot 67^7)^2 \cdot (-3 \cdot 7 \cdot 8731)$ & $2 \cdot 2$ \\
\hline
$67$,  \# 13 &  $\left(\begin{array}{ccc} 3 & 1 & -1 \\ 1 & 5 & -3+\ov{\tau}  \\ -1 & -3+\tau  & 5 \end{array}\right)$ &
$(2^8\cdot 5^4 \cdot 67^7)^2 \cdot (-2\cdot 5 \cdot 9769)$ & $2 \cdot 2$ \\
\hline  
$163$,  \# 3,4 &  $\left(\begin{array}{ccc} 7 & 3-\ov{\tau} & 2+\ov{\tau} \\ 3-\tau &82 & -3+\ov{\tau}  \\ 2+\tau & -3+\tau  & 14 \end{array}\right)$ &
\begin{tabular}{c}
$-2^{12} \cdot [-1,2]^{27} \cdot [-5,2] \cdot [15,4] \cdot [31,2] \cdot [67,8] \cdot [-137,8] \cdot$ \\  $[-39,28] \cdot [-49,44] \cdot [-743,94] \cdot [-169,164] \cdot [-907,158] \cdot [445,406] \cdot $ \\ $[-2507,342] \cdot [-3029,244] \cdot [-2777,388] \cdot [4043,74]=$ \\
$ -2^{12}  \cdot  63^{27} \cdot (179 \cdot 941\cdot 1187\cdot 7649\cdot 20297 \cdot$ \\$ 32573\cdot  79621\cdot 844483\cdot 1103581\cdot 1702867\cdot 7136971\cdot 10223179  \cdot $\\ $10876741\cdot 12806557\cdot 16869547)^{1/2}$ \end{tabular} & $2 \cdot 1$ \\
\hline 
$163$,  \# 85 &  $\left(\begin{array}{ccc} 2 & 1 & -\ov{\tau} \\ 1 & 2 & 1-\ov{\tau}  \\ -\tau & 1-\tau  & 28 \end{array}\right)$ &
$(2^5\cdot 7^4 \cdot 11^4 \cdot 163^7)^2 \cdot (-2\cdot 7 \cdot 11 \cdot 19 \cdot 127)$ & $2 \cdot 6$ \\
\hline  
\end{tabular}\end{table}\end{landscape}


\begin{thebibliography}{99}
\bibitem[AAMZ09]{zait} E. Alekseenko, S. Aleshnikov, N. Markin, A. Zaytsev, optimal curves of genus $3$ over finite fields with discriminant $-19$, available on \url{http://arxiv.org/abs/0902.1901}, (2009).
\bibitem[BL04]{bl}
C. Birkenhake, H. Lange, \textit{Complex abelian varieties}, second
edition, Grundleh\-ren der Mathematischen Wissenschaften, {\bf 302}
Springer-Verlag, Berlin, (2004).
\bibitem[DH08]{deconinck} B. Deconinck, M. van Hoeij, \url{http://www.math.fsu.edu/~hoeij/RiemannTheta/Siegel}.
\bibitem[Deu41]{deuring} M. Deuring, die Typen der Multiplicatorenringe elliptischer Funktionenkörper, Abh. Math. Sem. Univ. Hambourg {\bf 14}, (1941), 197-272.
\bibitem[FC90]{FC} G. Faltings, C.-L. Chai,  \textit{Degeneration of abelian varieties},
Ergebnisse der Mathematik und ihrer Grenzgebiete (3), {\bf 22},
Springer, Berlin, (1990).
\bibitem[Elk99]{elkies} N.D. Elkies,
The Klein quartic in number theory, in \emph{The eightfold way. The beauty of Klein's quartic curve}, Cambridge University Press. Math. Sci. Res. Inst. Publ. {\bf 35}, (1999),  51-101.
\bibitem[GV08]{vandergeer} G. Van der Geer, M. van der Vlugt, tables of curves with many points, available at \url{http://www.science.uva.nl/~geer/}.
\bibitem[Gro80]{gross} B.H. Gross, \emph{Arithmetic on elliptic curves with complex multiplication}, Lecture Notes in Mathematics, {\bf 776}, Springer-Verlag, Berlin-Heidelberg-New-York, (1980).
\bibitem[Gua09]{guardia} J. Guàrdia, On the Torelli problem and Jacobian Nullwerte in genus three, available on \url{http://arxiv.org/abs/0901.4376}, (2009).
\bibitem[Hof91]{hoffmann} D.W. Hoffmann, On positive definite hermitian forms, manuscripta math. {\bf 71}, 399-429, (1991).
\bibitem[HNR09]{HNR} E. Howe, E. Nart, C. Ritzenthaler,  \emph{Isogeny classes of Jacobians of dimension $2$ over finite fields}, to appear in Annales de l'Institut Fourier, (2009).
\bibitem[Hoy63]{Hoyt}
W.L. Hoyt, On products and algebraic families of Jacobian varieties. Ann. of
Math. {\bf 77},  (1963), 415-423.
\bibitem[Ibu93]{ibukiyama} T. Ibukiyama, on rational points of curves
  of genus $3$ over finite fields, Tôhoku Mat. J. {\bf 45}, (1993), 311-329.
\bibitem[Ichi95]{Ichi2}
T. Ichikawa, Teichm\"{u}ller modular forms of degree $3$. Amer. J. Math.
{\bf 117} (1995), no. 4, 1057-1061.
\bibitem[Ichi96]{Ichi3}
T. Ichikawa,  Theta constants and Teichm\"{u}ller modular forms. J. Number
Theory {\bf 61} (1996), no. 2, 409-419.
\bibitem[Ichi00]{Ichi4}
T. Ichikawa,  Generalized Tate curve and integral Teichm\"{u}ller modular
forms. Amer. J. Math., {\bf 122}, (2000), 1139-1174.
\bibitem[Igu67]{igusa2}
J.-I. Igusa,  Modular forms and projective invariants. Amer. J. Math, {\bf
89}, (1967), 817-855.
\bibitem[Kle89]{klein}
F. Klein,  Zur Theorie der Abelschen Funktionen. Math. Annalen,  {\bf 36}
(1889-90); = Gesammelte mathematische Abhandlungen {\bf  XCVII}, 388-474.
\bibitem[LR08]{LR}
G. Lachaud, C. Ritzenthaler,  On a conjecture of Serre on abelian
threefolds. \textit{Algebraic Geometry and its applications} (Tahiti, 2007),
88--115. World Scientific, Singapore, 2008.
\bibitem[LRZ08]{LRZ} G. Lachaud, C. Ritzenthaler, A. Zykin, Jacobians among Abelian threefolds: a formula of Klein and a question of Serre, available at \url{http://arxiv.org/abs/0802.4017}, (2008).
\bibitem[Lan06]{langepp} H. Lange,  Principal polarizations on products of elliptic curves,  in \emph{The geometry of Riemann surfaces and abelian varieties},  153-162, Contemp. Math., {\bf 397}, Amer. Math. Soc., Providence, RI, (2006).
\bibitem[Lau01]{lauter} K. Lauter, Geometric methods for
    improving the upper bounds on the number of rational points on
    algebraic curves over finite fields, with an appendix by
  J. P. Serre, Journal of Algebraic Geometry, {\bf 10}, (2001), 19-36.
\bibitem[Lau02]{lauterg3} K. Lauter, The maximum or minimum number of rational points on genus three curves over finite fields, with an Appendix by J-P. Serre, Compositio Math.  {\bf 134}, (2002), 87-111.
\bibitem[Mea08]{meagher} S. Meagher, \emph{Twists of genus $3$ and their Jacobians}, PhD thesis available at \url{http://irs.ub.rug.nl/ppn/314417028}, (2008).
\bibitem[Mil86]{milne} J.S. Milne, Abelian varieties,
in \textit{Arithmetic Geometry}, Cornell \& Silverman,
  Springer-Verlag, (1986).
 \bibitem[Mum70]{mumford} D. Mumford, \emph{abelian varieties}, Oxford university press, (1970).
 \bibitem[NR08]{NR} E. Nart, C. Ritzenthaler, Jacobians in isogeny classes of supersingular abelian threefolds in characteristic $2$, Finite Fields and their applications, {\bf 14},  (2008), 676-702.
 \bibitem[OU73]{Ueno}
F. Oort, K. Ueno,  Principally polarized abelian varieties of dimension
two or three are Jacobian varieties.  J. Fac. Sci. Univ. Tokyo Sect. IA
Math., {\bf  20}, (1973), 377-381.
\bibitem[Sch98]{schiemann} A. Schiemann, Classification of hermitian forms with the neighbour method, J. Symbolic Comput.  {\bf 26},  (1998), 487--508. See tables on \url{http://www.math.uni-sb.de/ag/schulze/Hermitian-lattices/}. 
 \bibitem[Ser82]{serreg2} J.-P. Serre, nombre de points des courbes algébriques sur $\GF_q$, Sém. de Théorie des Nombres de Bordeaux, 1982/83, exp. no. 22. (=Oeuvres III, No. 132, 701-705).
\bibitem[Top03]{top} J. Top, Curves of genus $3$ over small finite fields,
Indag. Math. (N.S.) {\bf 14}, (2003),  275-283. 
\bibitem[Wat69]{waterhouse} W.C. Waterhouse, abelian varieties over finite fields, Ann. scient. Ec. Norm. Sup., 4ième série, {\bf 2}, (1969), 521-560.
\end{thebibliography}
\end{document}